 
\documentclass[11pt]{article}

\usepackage[margin=1in]{geometry}   
\usepackage{color} 
\usepackage{enumerate}
\usepackage{graphicx}              
\usepackage{amsmath}               
\usepackage{amsfonts}              
\usepackage{amsthm}                
\usepackage{amssymb} 
\usepackage[normalem]{ulem}
	
\usepackage{amssymb,amsfonts,amsthm,amsmath,graphicx,url}

  \usepackage{latexsym}
  \usepackage{graphicx, float, url} 
\newtheorem{thm}{Theorem}[section]
\newtheorem{lem}[thm]{Lemma}
  \newtheorem{const}[thm]{Construction}

\newtheorem{cor}[thm]{Corollary}

\newcommand{\mf}[1]{\mathbf{#1}}  
\newcommand{\ZZ}{\mathbb{Z}}      

\newcommand{\norm}[1]{\lVert#1\rVert}

\newcommand{\BL}[2]{{\left\lfloor\frac{#1}{#2}\right\rfloor }}
\def\smid{3n(n^2 - \epsilon(n))/(n-1)(\sqrt{2n}-7)^3}

\def\sr {\lfloor (s+r)/2\rfloor}

\def\ss {\lfloor s/2\rfloor}

\def\bsss {\left\lfloor \frac{s}{2}\right\rfloor}
\def\bssspl {\left\lfloor \frac{s+2}{2}\right\rfloor}
\def\rr {\lfloor r/2\rfloor}

\def\brrr {\left\lfloor \frac{r}{2}\right\rfloor}
\def\qq {\lfloor q/2\rfloor}
\def\cqq {\lceil q/2\rceil}

\def\nn {\lfloor n/2\rfloor}

\def\aw {{\overrightarrow{w}}}
\def\api {{\overrightarrow{\pi}}}
\def\aw {{\overrightarrow{w}}}

\def\b0{{\bf 0}}
\def\ba{{\bf a}}
\def\bb{{\bf b}}

\def\bu{{\bf u}}
\def\bv{{\bf v}}
\def\bw{{\bf w}}

\long\def\delete#1{}

\input amssym.def
\input amssym.tex

\newcommand{\be}{\begin{equation}}
\newcommand{\ee}{\end{equation}}
\newcommand{\bea}{\begin{eqnarray}}
\newcommand{\eea}{\end{eqnarray}}
\newcommand{\bean}{\begin{eqnarray*}}
\newcommand{\eean}{\end{eqnarray*}}

\begin{document}
	
	\title{Forwarding and optical indices of 4-regular circulant networks}

	\author{Heng-Soon Gan, Hamid Mokhtar\thanks{Corresponding author} and Sanming Zhou\thanks{{\scriptsize hsg@unimelb.edu.au (H.-S.~Gan), hmokhtar@student.unimelb.edu.au (H.~Mokhtar), smzhou@ms.unimelb.edu.au (S. Zhou)}} \\
\small School of Mathematics and Statistics, The University of Melbourne,\\ \small Parkville, VIC 3010, Australia}

	\maketitle
	\openup 0.5\jot 
	
	\begin{abstract} 
 An all-to-all routing in a graph $G$ is a set of oriented paths of $G$, with exactly one path for each ordered pair of vertices. The load of an edge under an all-to-all routing $R$ is the number of times it is used (in either direction) by paths of $R$, and the maximum load of an edge is denoted by $\pi(G,R)$. The edge-forwarding index $\pi(G)$ is the minimum of $\pi(G,R)$ over all possible all-to-all routings $R$, and the arc-forwarding index $\api(G)$ is defined similarly by taking direction into consideration, where an arc is an ordered pair of adjacent vertices. Denote by $w(G,R)$ the minimum number of colours required to colour the paths of $R$ such that any two paths having an edge in common receive distinct colours. The optical index $w(G)$ is defined to be the minimum of $w(G,R)$ over all possible $R$, and the directed optical index $\aw(G)$ is defined similarly by requiring that any two paths having an arc in common receive distinct colours. In this paper we obtain lower and upper bounds on these four invariants for $4$-regular circulant graphs with connection set $\{\pm 1,\pm s\}$, $1<s<n/2$. We give approximation algorithms with performance ratio a small constant for the corresponding forwarding index and routing and wavelength assignment problems for some families of $4$-regular circulant graphs.
	
	\medskip
	{{\em Keywords:} Circulant networks; Arc-forwarding index; Edge-forwarding index; Optical index; Routing and wavelength assignment}
	\end{abstract}

\section{Introduction} 

\subsection{Motivation and definitions}
	
Circulant graphs, or multi-loop networks as used in computer science literature, are basic structures for interconnection networks \cite{Bermond1995}. As such a lot of research on circulant graphs has been done in more than three decades, leading to a number of results on various aspects of circulant graphs \cite{Bermond1995, Fertin2004,Gauyacq1998,Gomez2007, Hwang2001, Mans2004, Stojmenovic1997,Thomson2008,Thomson2010,Thomson2012}. Nevertheless, our knowledge on how circulant networks behave with regard to information dissemination is very limited. For example, our understanding to some basic communication-related invariants for circulant graphs such as the arc-forwarding, edge-forwarding and optical indices is quite limited. The purpose of this paper is to study these invariants with a focus on circulant networks of degree 4. 

Given an integer $n \ge 3$, denote by $\ZZ_n$ the group of integers modulo $n$ with operation the usual addition. Given $S \subset \ZZ_n$ such that $ 0 \not \in S$ and $s\in S$ implies $-s\in S$, the \textit{circulant graph} $C_n(S)$ of order $n$ with respect to $S$ is defined to have vertex set $\ZZ_n$ such that $i, j \in \ZZ_n$ are adjacent if and only if $i-j\in S$. (In other words, a circulant graph is a Cayley graph on $\ZZ_n$.) In the case when $S = \{a, b, n-a, n-b\}$, where $a, b, n-a, n-b$ are pairwise distinct elements of $\ZZ_n$, $C_n(S)$ is a 4-regular graph (that is, every vertex has degree 4) and we use $C_n(a, b)$ in place of $C_n(S)$. In this paper we deal with circulant graphs $C_n(1, s)$ for some $s \in \ZZ_n \setminus \{-1, 0, 1, n/2\}$. (Note that when $n$ and $a$ are coprime, $C_n(a, b)$ is isomorphic to $C_n(1,s)$, where $s \equiv a^{-1}b\mod n$). Without loss of generality,  we assume $1<s<n/2$. Just like any other Cayley graph, $C_n(1, s)$ is vertex-transitive, that is, for any $i, j \in \ZZ_n$ there exists a permutation of $\ZZ_n$ that preserves the adjacency relation of $C_n(1, s)$ and maps $i$ to $j$. (In fact, for fixed $i, j$ this permutation can be chosen as $x \mapsto x+(j-i)$, $x \in \ZZ_n$ with operation undertaken in $\ZZ_n$.)

A network can be represented by an undirected graph $G=(V(G), E(G))$, where the node set $V(G)$ represents the set of processors or routers, and the edge set $E(G)$ represents the set of physical links. So we will use the words `graph' and `network' interchangeably. We assume the full duplex model, that is, an edge is regarded as two \textit{arcs} with opposite directions over which messages can be transmitted concurrently. A connection request (or a \textit{request} for short) is an ordered pair of distinct nodes $(x,y)$ for which a path $P_{x, y}$ with orientation from $x$ to $y$ in $G$ must be set up to transmit messages from $x$ to $y$. In this paper we only consider all-to-all communication, or equivalently, the \textit{all-to-all request set} for which one path from every node to every other node must be set up in order to fulfil communications. (In the literature other types of request sets have also been studied.) We call a set of paths $R = \{P_{x, y}: x, y \in V(G), x \not=y\}$ an \textit{all-to-all routing} (or a \textit{routing} for short) in $G$, where $P_{x, y}$ is not necessarily the same as $P_{y, x}$. The load of an edge $e$ of $G$ with respect to $R$, denoted by $\pi(G,R,e)$, is the number of paths in $R$ passing through $e$ in either directions. Similarly, the load of an arc $a$ of $G$ with respect to $R$, denoted by $\api(G,R,a)$, is the number of paths in $R$ passing through $a$ along its direction. Define 
\be
\label{eq:load}
\pi(G, R):=\max_{e \in E(G)} \pi(G,R,e),\quad \api(G, R):=\max_{a\in A(G)} \api(G, R, a),
\ee
where $A(G)$ is the set of arcs of $G$. Define
\be
\label{eq:pidef}
\pi(G):=\min_{R}\pi(G,R),\quad \api(G):=\min_{R}\api(G,R)
\ee
and call them the \emph{edge-forwarding} and \emph{arc-forwarding indices} of $G$ \cite{Beauquier1997,Heydemann1989}, respectively, where the minimum is taken over all routings $R$ for $G$. Obviously, we have  
	 \begin{equation}
	 \label{eq:pi}
	 \api(G)\geq\pi(G)/2.
	 \end{equation} 
The \textit{edge-forwarding index problem} is the one of finding $\pi(G)$ for a given graph $G$, and the \textit{arc-forwarding index problem} is understood similarly.

In practical terms, the edge-forwarding and arc-forwarding indices measure the minimum heaviest load on edges and arcs of a given network, respectively, with respect to all-to-all communication. If the network is all-optical, another important problem is to minimise the number of wavelengths used such that any two paths having an edge (or arc) in common are assigned distinct wavelengths. Regarding wavelengths as colours, these problems can be formulated as the following {\em path colouring problems}. Given a routing $R$ for $G$, an assignment of one colour to each path in $R$ is called an {\em edge-conflict-free colouring} of $R$ if any two paths having an edge in common (regardless of the orientation of the paths) receive distinct colours, and an {\em arc-conflict-free colouring} of $R$ if any two paths having an arc in common (with the same orientation as the paths) receive distinct colours. (An edge-conflict-free colouring is called \emph{valid} in \cite{Gargano2000}.) Define $w(G, R)$ ($\aw(G,R)$, respectively) to be the minimum number of colours required in an edge-conflict-free (arc-conflict-free, respectively) colouring of $R$. Define
\be
\label{eq:w}
w(G):=\min_{R} w(G,R),\quad \aw(G):=\min_{R}\aw(G,R)
\ee
and call them the {\em undirected} and {\em  directed optical indices} of $G$, respectively, 
where the minimum is taken over all routings $R$ for $G$.  
Since the number of colours needed is no less than the number of paths on a most loaded edge (or arc in the directed version), we have (see e.g.~\cite{Bermond2000})
	\begin{equation}
	\label{eq:piw}
	w(G)\ge\pi(G),\quad \aw(G)\ge\api(G).
	\end{equation}
In general, equality in (\ref{eq:piw}) is not necessarily true (see e.g.~\cite{Kosowski2009,Yuan2007}). The \textit{routing and wavelength assignment problem} is the problem of computing $w(G)$, and its \textit{oriented version} is the one of finding $\aw(G)$.

\subsection{Literature review} 

The study of the forwarding indices has been intensive in the literature. 
Heydemann et al. \cite{Heydemann1989} proposed the edge-forwarding index problem and obtained basic results on this invariant, including upper bounds for the Cartesian product of graphs. In \cite{Sole1994} it was proved that orbital regular graphs (which are essentially Frobenius graphs \cite{Fang1998} except cycles and stars) achieve the smallest possible edge-forwarding index. In \cite{Thomson2008,Thomson2010,Thomson2012}, Thomson and Zhou gave formulas for the edge-forwarding and arc-forwarding indices of two interesting families of Frobenius circulant graphs. The exact value of edge-forwarding index of some other graphs have also been computed, including Kn\"odel graphs \cite{Fertin2004} and recursive circulant graphs \cite{Gauyacq1998}. However, in general it is difficult to find the exact value or a good estimate of the edge-forwarding or arc-forwarding index of a graph, even  for some innocent-looking classes of graphs such as circulant graphs.  The authors of \cite{Xu2007} obtained lower and upper bounds on the edge-forwarding index of a general circulant graph. However, these bounds are difficult to compute in general.  Also, a uniform routing of shortest paths may not exist for circulant graphs, just as the case for Cayley graphs in general \cite{Heydemann1997}.
The reader is referred to the recent survey \cite{Xu2013} for the state-of-the-art on edge-forwarding and arc-forwarding indices of graphs.  

The routing and wavelength assignment problem has also received considerable attention due to the importance of optical networking.  
The authors of \cite{Beauquier1997} surveyed theoretical results and asked a few questions on the routing and wavelength assignment problem for a general request set as well as the all-to-all request set, especially for trees, rings, tori, meshes and hypercubes. 
In \cite{Gargano2000} a survey of results on several versions of the routing and wavelength assignment problem was given and the exact value of the directed optical index of stars was obtained.
The exact value of the directed optical index was computed for trees  \cite{Gargano1997}, rings \cite{Bermond2000}, trees of rings \cite{Beauquier:1999b}, hypercubes \cite{Bermond2000}, a few families of recursive circulant graphs \cite{Amar2001}, Cartesian sum of complete graphs \cite{Beauquier1999}, Cartesian product of paths with equal even lengths \cite{Beauquier1999}, and Cartesian product of some graphs such as rings \cite{Beauquier1999, Schroder1997}. While the exact value of the undirected optical index has been obtained for rings and hypercubes \cite{Bermond2000}, determining this invariant remains open for many families of graphs \cite{Beauquier1997}. For the undirected routing and wavelength assignment problem for arbitrary request sets on trees of rings, the best known result is a 2.75-approximation algorithm \cite{Bian2009} and a 2-approximation algorithm \cite{Deng2003} for a subfamily of such graphs.

In  \cite{Kosowski2009} it was proved that the problem of deciding whether $\pi(G)\le 3$ or $w(G) \le 3$ is NP-complete.    

\subsection{Main results}

{\em 
In what follows we assume that $n$ and $s$ are integers with $n \ge 5$ and $1 < s < n/2$, and $q$ and $r$ are integers defined by 
$$
q := \lfloor n/s\rfloor,\;\, n := q s + r.
$$ 
} 
Observe that $0 \le r < s$ and $q \ge 2$ as $s < n/2$. 
Denote by $x\oplus y$ ($x \ominus y$, respectively) the integer $x+y\mod n$ ($x - y\mod n$, respectively) between $0$ and $n-1$, where $x,y\in \ZZ_n$. Denote
$$
\epsilon(x) := \left\{ 
\begin{array}{ll}
1,\;\; \mbox{if $x$ is an odd integer}\\[0.2cm] 
0,\;\; \mbox{if $x$ is an even integer.}
\end{array}
\right.
$$

The first main result in this paper is as follows.
 
\begin{thm}
\label{thm:tr1}
The following hold:
\begin{itemize}
\item[\rm (a)]
if $2 \le  s \le \sqrt{n}  - 1$, then
\be
\label{eq:tf1}
\frac{n^2 - \epsilon(n)}{8(s+1)} \leq  \frac{\pi(C_n(1,s))}{2}  \le  \api(C_n(1,s)) \leq \frac{(n-r)( n+ r+2)+s^2}{8s};
\ee 
\item[\rm (b)]
if $ s = \sqrt{n}$, then
\be
\label{eq:tf2}
\frac{\sqrt{n}\ (n - 1)}{8}
 \leq 
 \frac{\pi(C_n(1,s))}{2}  \le  \api(C_n(1,s))
 \leq
 \frac{\sqrt{n}\ (n - \epsilon(s))}{8};
\ee 
\item[\rm (c)] 
if $\sqrt{n} + 1 \le s  < n/2$, then 
\be
\label{eq:tf3}
\max\left\{\frac{n^2 - \epsilon(n)}{8(s+1)},\frac{(n-1)(\sqrt{2n}-7)^3}{24n} \right\}
 \leq
 \frac{\pi(C_n(1,s))}{2} \le \api(C_n(1,s)) $$ $$ \hspace{3in}
\leq
 \frac{s^2 (n + r+ 2) - \epsilon(s) (n-r)}{8s}.
\ee 
\end{itemize}
\end{thm}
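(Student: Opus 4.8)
The plan is to prove each of the three chains by separately establishing a lower bound on $\pi(C_n(1,s))$ and an upper bound on $\api(C_n(1,s))$; the middle inequality $\pi/2\le\api$ is exactly $(\ref{eq:pi})$ and needs no work. Every lower bound rests on one of two classical devices for vertex-transitive graphs. The first is an \emph{averaging bound}: by vertex-transitivity any routing has total edge-load at least $\sum_{x\ne y}d(x,y)=nK$, where $K:=\sum_{v\in\ZZ_n}d(\mathbf{0},v)$ and $d$ is the graph distance, so dividing by $|E(C_n(1,s))|=2n$ gives $\pi\ge K/2$ and hence $\pi/2\ge K/4$. The second is a \emph{bisection (edge-cut) bound}, used for the term common to (a) and (c).

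First I would prove $\frac{n^2-\epsilon(n)}{8(s+1)}\le\pi/2$. Split $\ZZ_n$ into $\{0,\dots,\lfloor n/2\rfloor-1\}$ and its complement; exactly two $1$-edges and, since $s\le\lfloor n/2\rfloor$, exactly $2s$ edges $\{i,i\oplus s\}$ straddle the cut, i.e.\ $2(s+1)$ in total, while all $2\lfloor n/2\rfloor\lceil n/2\rceil=\tfrac12(n^2-\epsilon(n))$ ordered cross-pairs must use one of them, forcing some cut edge to carry load $\ge\frac{n^2-\epsilon(n)}{4(s+1)}$. For (b) this is too weak, so I would instead evaluate $K$ exactly when $s=\sqrt n$ (so $r=0$, $q=\sqrt n$): writing each $v$ in base $s$ and choosing the shorter residue in each coordinate, the generators $\pm1,\pm s$ lie at distance $1$ and the distance distribution is symmetric enough that $K=\tfrac12\sqrt n\,(n-1)$, whence $\pi/2\ge K/4$ is the claim. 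For the extra cube term in (c) I would exploit the polynomial ball growth special to these two-generator Cayley graphs: since every $v$ with $d(\mathbf{0},v)\le t$ has the form $\alpha s\oplus\beta$ with $|\alpha|+|\beta|\le t$, the ball satisfies $|B(t)|\le2t^2+2t+1$. Substituting this into $K=\sum_{t\ge1}\bigl(n-|B(t-1)|\bigr)$ and optimising the truncation at $t\approx\sqrt{n/2}$ yields average distance at least $\frac{(\sqrt{2n}-7)^3}{6n}$, so $\pi/2\ge K/4\ge\frac{(n-1)(\sqrt{2n}-7)^3}{24n}$; the shift by $7$ is the slack absorbing the integer truncation and the lower-order terms of $|B(t)|$.

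The upper bounds all come from one explicit \emph{dimensional routing}. View $\ZZ_n$ as a $(q+1)\times s$ grid, placing $v$ at column $\lfloor v/s\rfloor$ and row $v\bmod s$ (the last column holding only the $r$ leftover rows), so that $\pm s$-arcs move between columns and $\pm1$-arcs within/between rows. Route each request by correcting the column coordinate with $\pm s$-steps and the row coordinate with $\pm1$-steps, always taking the shorter way around each cyclic coordinate. By vertex-transitivity it suffices to load-count the paths out of $\mathbf{0}$ together with their translates, so the load on a fixed $\pm s$-arc equals the number of requests whose column-phase crosses one chosen column boundary, and similarly a $\pm1$-arc corresponds to one chosen row boundary; summing a short arithmetic series over columns (resp.\ rows) gives closed forms. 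The critical arc is a column arc when $s$ is small (regimes (a),(b), dominant term $q^2s/8=n^2/(8s)$) and a row arc when $s$ is large (regime (c), dominant term $qs^2/8$), which is exactly why the three upper bounds take different shapes.

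The main obstacle is this upper-bound bookkeeping rather than the lower bounds. Three irregularities must be tracked at once: the incomplete last column of height $r$, which perturbs every per-column count and generates the factors $(n-r)(n+r+2)$ and $s^2(n+r+2)$; the rounding in ``shorter way around'', which produces the parity corrections $\epsilon(n),\epsilon(s)$ and the residual $+s^2$ in (a); and the modular wrap of both coordinates. Concretely, one must show that the per-arc load, viewed as a function of arc position, is maximised at the boundary I single out and equals the stated expression there, and must confirm that the column-critical value exceeds the row-critical value precisely for $s\le\sqrt n$ (and conversely for $s>\sqrt n$); establishing this maximum is the delicate step. A reassuring check is that the leading term $n^2/(8s)$ of the (a) upper bound matches the bisection lower bound $\frac{n^2-\epsilon(n)}{8(s+1)}$, so the routing is asymptotically optimal there and the remaining effort is only to pin down the lower-order and parity terms.
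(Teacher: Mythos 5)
Your overall plan coincides with the paper's proof: the first lower bound is exactly the cut argument of Lemma~\ref{lem:lwr1} (same set $U$, same count $|\delta(U)|=2(s+1)$), the cube term comes from the average-distance inequality (\ref{eq:trvl}) (the paper simply cites Hwang's bound $\overline{d}\ge(\sqrt{2n}-7)^3/6n$ rather than re-deriving it from the ball-growth estimate $|B(t)|\le 2t^2+2t+1$ as you propose, but your derivation sketch gives the right leading constant), and the upper bounds come from the same ``correct the $s$-coordinate, then the $1$-coordinate, shorter way around'' routing whose arc loads are computed by translation invariance (Construction~\ref{const} and Lemmas~\ref{lem:ub}, \ref{lem:ub1}), including the correct identification of which arc family is critical on each side of $s=\sqrt n$.

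The one step that would fail as written is your evaluation of $K=\sum_{v}d(0,v)$ for $s=\sqrt n$ by ``writing $v$ in base $s$ and choosing the shorter residue in each coordinate.'' That prescription computes distances in the torus $\ZZ_s\times\ZZ_s$, i.e.\ it implicitly uses the relator lattice generated by $(s,0)$ and $(0,s)$; but $(s,0)$ is not a relator of $C_n(1,s)$. The correct lattice $X=\{(x_1,x_2):x_1+x_2s\equiv 0\bmod n\}$ is generated by $(s,-1)$ and $(0,s)$, a sheared parallelogram, and the shear changes the distance sum. Concretely, for $n=16$, $s=4$ independent coordinate reduction gives $K=32$, whereas the true value is $30=\sqrt{n}(n-1)/2$; in general your method yields $K=\sqrt n\,n/2$ for even $s$, which would ``prove'' a lower bound of $\sqrt n\,n/8$ on $\pi/2$ --- equal to the upper bound $\sqrt n(n-\epsilon(s))/8$ in that case, and not actually justified. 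The paper closes this gap with the packed-basis machinery of \cite{Zerovnik1993}: Lemma~\ref{lem:pb} shows $\{(\sqrt n,-1),(0,\sqrt n)\}$ is a packed basis, Lemma~\ref{lem:dist} reduces $d(0,l(\bv))$ to the $L_1$-distance to the nearest of the four corner points, and Lemma~\ref{lem:ssqrtn} partitions the parallelogram into four regions accordingly and sums exactly to $\sqrt n(n-1)/2$. You need that (or an equivalent exact treatment of the shear) for part (b); the remaining components of your proposal are sound and match the paper's argument.
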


Let $\mathtt{ratio}$ denote the ratio of the upper bound to the lower bound in the same equation above. In (\ref{eq:tf1}), $\mathtt{ratio} =   \frac{s+1}{s} \left(\left(1-\frac{r}{n}\right)\left(1+\frac{r+2}{n}\right) + \frac{s^2}{n^2}\right)\frac{n^2}{n^2 - \epsilon(n)}$, which is asymptotically $(s+1)/s$ ($\le 3/2$) as $n \rightarrow \infty$. 
In (\ref{eq:tf2}), $\mathtt{ratio} = \frac{n-\epsilon(n)}{n-1}$, which tends to 1 as $n \rightarrow \infty$. 
By using the lower bound $(n^2-\epsilon(n))/8(s+1)$ and the upper bound  $s(n+r+2)/8$ in (\ref{eq:tf3}), we have $\mathtt{ratio} \le (s^2+s)$$ (n+r+2) / (n^2 - \epsilon(n))$. If $ s = c\sqrt{n}$, for $c\in(1,\sqrt{3/2}]$, then $\mathtt{ratio} \le  c^2 +o(1)$, which is at most  $3/2$ asymptotically.     
If $\sqrt{3n/2} < s < n/2$, then the lower bound in (\ref{eq:tf3}) is equal to $(n-1)(\sqrt{2n}-7)^3/24n$ and $\mathtt{ratio} \le 3s n (n+1)/(n-1)(\sqrt{2n}-7)^3$. In the latter case, the upper bound increases with $s$ and can be $O(\sqrt{n})$ in the worst case scenario when $s \approx cn$ for some constant $c < 1/2$.  
 
We will prove the lower bounds in (\ref{eq:tf1})-(\ref{eq:tf3}) in the next section. The upper bounds will be proved in Section \ref{sec:ub}; see Lemma \ref{lem:ub1} which gives more information and better upper bounds in some cases. To establish the upper bounds, we will give a specific routing (see Construction \ref{const}), which can be viewed as an approximation algorithm for computing $\api(C_n(1,s)$ and $\pi(C_n(1,s))$. From the discussion above, for $2 \le  s \le \sqrt{3n/2}$, the performance ratio of this algorithm is at most  $3/2$ asymptotically. So we obtain the following corollary of Theorem \ref{thm:tr1}.

\begin{cor}
\label{cor:tr1}
There is a 1.5-factor approximation algorithm to solve the edge-forwarding and arc-forwarding problems for 4-regular circulant graphs $C_n(1, s)$ with $n$ sufficiently large and $2 \le s \le \sqrt{3n/2}$. 
\end{cor}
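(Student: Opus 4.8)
The plan is to take the routing produced by Construction~\ref{const} as the approximation algorithm itself and to read off its performance ratio directly from Theorem~\ref{thm:tr1}. First I would confirm that this routing is computable in time polynomial in $n$: the construction specifies, for each ordered pair $(x,y)$, an explicit path determined by the quotient and remainder of $x\ominus y$ on division by $s$, so each of the $n(n-1)$ paths is written down in $O(n)$ steps and the whole routing $R$ in $O(n^3)$ time. This $R$, together with its maximum arc-load (respectively edge-load), is what the algorithm returns on input $(n,s)$.

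Next I would bound the cost of $R$ against the optimum. For the arc-forwarding problem the cost of the produced solution is $\api(C_n(1,s),R)$, which is at most the upper bound $U(n,s)$ of Theorem~\ref{thm:tr1}, since that upper bound is established precisely by Construction~\ref{const}; meanwhile the optimum $\api(C_n(1,s))$ is at least the matching lower bound $L(n,s)$ on the left of the same inequality. Hence the arc-forwarding performance ratio on this input is at most $U(n,s)/L(n,s)=\mathtt{ratio}$. For the edge-forwarding problem I would reuse the same routing: because each edge is the union of its two arcs, $\pi(C_n(1,s),R)\le 2\,\api(C_n(1,s),R)\le 2U(n,s)$, while the left inequalities of Theorem~\ref{thm:tr1} give $\pi(C_n(1,s))\ge 2L(n,s)$; the two factors of $2$ cancel and the edge-forwarding ratio is again at most $\mathtt{ratio}$.

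It then remains to verify that $\mathtt{ratio}\le 3/2$ asymptotically throughout $2\le s\le\sqrt{3n/2}$, which I would do by splitting into the three regimes of Theorem~\ref{thm:tr1}. For $2\le s\le\sqrt{n}-1$ I use the explicit expression for $\mathtt{ratio}$ recorded after (\ref{eq:tf1}): as $n\to\infty$ the parenthetical factor and $n^2/(n^2-\epsilon(n))$ tend to $1$ (since $r<s\le\sqrt{n}$ forces $r/n,(r+2)/n,s^2/n^2\to 0$), leaving the dominant factor $(s+1)/s\le 3/2$. The single value $s=\sqrt{n}$ is handled by (\ref{eq:tf2}), where $\mathtt{ratio}=(n-\epsilon(n))/(n-1)\to 1$. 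For $\sqrt{n}+1\le s\le\sqrt{3n/2}$ I combine the lower bound $(n^2-\epsilon(n))/8(s+1)$ with the upper bound $s(n+r+2)/8$ from (\ref{eq:tf3}) to get $\mathtt{ratio}\le(s^2+s)(n+r+2)/(n^2-\epsilon(n))$, and writing $s=c\sqrt{n}$ with $1<c\le\sqrt{3/2}$ turns this into $c^2+o(1)\le 3/2+o(1)$. Piecing the three regimes together yields a uniform bound $\mathtt{ratio}\le 3/2+o(1)$ over the whole interval, so for $n$ sufficiently large the routing of Construction~\ref{const} is a $1.5$-factor approximation for both problems.

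The step I expect to require the most care is this last uniformity argument across the full interval $2\le s\le\sqrt{3n/2}$, rather than a merely fixed-$s$ limit. Because $s$ may grow with $n$, the several error terms must be controlled simultaneously, and one must check that their worst case over $s$ does not spoil the constant: the correction is largest in two opposite places, near $s=2$ (where $(s+1)/s$ attains $3/2$ while the remaining factors exceed $1$ by $O(1/n)$) and near $s=\sqrt{3n/2}$ (where $c^2$ already equals $3/2$), so strictly speaking the bound is $3/2+o(1)$ and the stated ratio $3/2$ is the limiting (asymptotic) ratio that the qualifier ``$n$ sufficiently large'' is meant to capture. A secondary, routine point is verifying continuity of the argument at the junctions $s=\sqrt{n}\pm 1$ where the governing inequality of Theorem~\ref{thm:tr1} changes.
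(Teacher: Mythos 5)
Your proposal is correct and follows essentially the same route as the paper: the paper likewise takes the routing of Construction~\ref{const} as the algorithm and derives the $1.5$ factor from the upper-to-lower-bound ratios computed in the discussion following Theorem~\ref{thm:tr1}, split over the three regimes of $s$. Your added remarks on polynomial computability, the cancellation of the factor $2$ between the edge and arc versions, and the uniformity of the $3/2+o(1)$ bound over the whole range of $s$ only make explicit what the paper leaves implicit.
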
 

In the worst case when $s \approx cn$ is large, where $c < 1/2$ is a constant, the ratio obtained from (\ref{eq:tf3}) (and that of the approximation algorithm from Construction \ref{const}) is $O(\sqrt{n})$. It seems that this large ratio is due to the fact that the lower bound in (\ref{eq:tf3}) is unsatisfactory when $s$ is large. Our next result shows that sometimes we can significantly improve this lower bound for large $s$. This enhanced lower bound together with the upper bound in (\ref{eq:tf3}) implies that for $s \approx c n$ our algorithm can achieve a constant performance ratio $1/c$ in some cases. 
 
\begin{thm} 
\label{thm:lwr3}
If $r\leq q$ or $r+q\ge s+1$, then
\be
\label{eq:lwr3}
\pi(C_n(1,s))\geq \frac{1}{2} \left\lfloor\frac{(s+1)^2}{2}\right\rfloor.
\ee
\end{thm}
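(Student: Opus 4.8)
The plan is to derive the bound from a general averaging inequality combined with a sharp estimate on the distance function of $C_n(1,s)$. First I would record the standard lower bound $\pi(G)\ge \frac{1}{|E(G)|}\sum_{(x,y)}d(x,y)$, valid for every graph: for any routing $R$ the total edge-load $\sum_{e}\pi(G,R,e)$ equals $\sum_{(x,y)}|P_{x,y}|\ge\sum_{(x,y)}d(x,y)$, so the maximum load is at least the average. Since $C_n(1,s)$ is $4$-regular we have $|E|=2n$, and since it is vertex-transitive, $\sum_{(x,y)}d(x,y)=n\,\sigma$ where $\sigma:=\sum_{v\in\ZZ_n}d(0,v)$. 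Hence $\pi(C_n(1,s))\ge\sigma/2$, and it suffices to prove $\sigma\ge\lfloor(s+1)^2/2\rfloor$.

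Next I would reduce this to a single clean inequality. Using the automorphism $x\mapsto -x$, which preserves $\{\pm1,\pm s\}$, we get $d(0,n-v)=d(0,v)$, and the $2s$ vertices $1,\dots,s,n-s,\dots,n-1$ are distinct because $s<n/2$. Therefore $\sigma\ge 2\sum_{v=1}^{s}d(0,v)$, and the target follows from the key estimate
\[ d(0,v)\ \ge\ \min(v,\,s+1-v)\qquad(1\le v\le s), \]
because $\sum_{v=1}^{s}\min(v,s+1-v)=\lfloor(s+1)^2/4\rfloor$ and $2\lfloor(s+1)^2/4\rfloor=\lfloor(s+1)^2/2\rfloor$.

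The heart of the argument is this key estimate, which I would prove from the representation $d(0,v)=\min\{|a|+|b|:a+bs\equiv v\pmod n\}$. Writing $a+bs=v+kn$ for $k\in\ZZ$, the unwrapped case $k=0$ is immediate: if $b\le0$ then $|a|+|b|\ge v$, while if $b\ge1$ then $|a|+|b|=b(s+1)-v\ge s+1-v$, so $k=0$ never beats $\min(v,s+1-v)$. The work is in the wrapped case. Substituting $b=-q+b'$ turns $k=-1$ into $a=(v-r)-b's$, and the cheapest choice $b'=0$ (that is $b=-q$, which reaches the vertex $r\equiv -qs$ using $q$ jumps of size $s$ and then corrects by $|v-r|$ unit steps) costs $|v-r|+q$; the remaining $b'$, as well as $k=+1$ and all $|k|\ge2$ (which force $|b|\gtrsim 2q$), cost strictly more. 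Thus the estimate reduces to $|v-r|+q\ge\min(v,s+1-v)$: when $v\ge r$ this reads $v-r+q\ge v$, i.e. $q\ge r$; when $v<r$ it reads $r-v+q\ge s+1-v$, i.e. $r+q\ge s+1$.

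This last split is exactly where the hypothesis enters, and I expect the case analysis to be the main obstacle. The disjunction ``$r\le q$ or $r+q\ge s+1$'' is precisely what guarantees that the single cheapest wrapped representation, realised at $b=-q$, cannot undercut either branch of $\min(v,s+1-v)$. The delicate points to get right are (i) confirming that $b=-q$ genuinely minimises the cost among all wrapped representations, ruling out neighbouring $b$ and all $|k|\ge2$ uniformly in $v$, and (ii) organising the dichotomy $v\gtrless r$ so that one of the two disjuncts always applies. As a sanity check, the bound fails when both hypotheses fail: in $C_{25}(1,7)$ we have $q=3<r=4$, and indeed $-3s\equiv4\pmod{25}$, so $d(0,4)\le3<\min(4,4)$, violating the key estimate — confirming that the condition is not merely convenient but necessary.
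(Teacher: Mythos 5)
Your overall strategy --- the averaging bound $\pi(G)\ge \sum_{(x,y)}d(x,y)/|E(G)|$, vertex-transitivity, restriction to the $2s$ vertices $1,\dots,s,n-s,\dots,n-1$, and the key estimate $d(0,v)\ge\min(v,s+1-v)$ --- is exactly the skeleton of the paper's proof (which in fact computes these distances exactly). The gap is in your treatment of the wrapped representations. The claim that $b'=0$ (i.e.\ $b=-q$) is ``the cheapest choice'' among wrapped representations, so that everything reduces to $|v-r|+q\ge\min(v,s+1-v)$, is false: for $n=23$, $s=7$ (so $q=3$, $r=2\le q$, hypothesis satisfied) and $v=6$, the representation $a=-3$, $b=-2$ has cost $5$, while $b=-q=-3$ forces $a=v-r=4$ and cost $7$. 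The key estimate survives in this example (both costs exceed $\min(6,2)=2$), but your reduction does not, and the cases you dismiss in one clause --- the other values of $b$ with $k=-1$, all of $k=+1$ (where, e.g., $b=q+1$ gives cost $|v+r-s|+q+1$ and needs its own argument involving the hypothesis), and $|k|\ge2$ --- each require separate verification against $\min(v,s+1-v)$. You flag this yourself as ``the main obstacle,'' and it is: that verification is the actual content of the proof, not a routine check. A secondary issue: your case split pairs $v\ge r$ with the disjunct $q\ge r$ and $v<r$ with $r+q\ge s+1$, which as written would need both disjuncts simultaneously; one must check (it is true, but must be said) that either disjunct alone handles both ranges of $v$.

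The paper sidesteps this entire case analysis by importing the \v{Z}erovnik--Pisanski machinery: it exhibits a packed basis $\{\mathbf{a},\mathbf{b}\}$ for the lattice $X$ of representations of $0$ in each case covered by the hypothesis (Lemma \ref{lem:pb}), and then invokes the fact that for a packed basis the distance $\norm{\mathbf{v}-X}$ is attained at one of the four corners of the parallelogram $[\mathbf{a},\mathbf{b}]$ (Lemma \ref{lem:dist}). This reduces the minimum over all integer representations --- your wrapped and unwrapped cases together --- to a comparison of four explicit norms, which is precisely what makes $d(0,v)$ computable and is where the hypothesis $r\le q$ or $r+q\ge s+1$ enters (it is what guarantees a packed basis of the required form exists). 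To complete your proof you would either need to carry out the full case analysis over all $(a,b,k)$ honestly, or adopt this packed-basis reduction.
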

We prove Theorem \ref{thm:lwr3} by computing the sum of the distances between all pairs of nodes in $C_n(1,s)$, which is done by investigating an equivalent problem \cite{Zerovnik1993} for the integer lattice $\ZZ^2$.

The second main result in this paper is the following theorem on the optical indices of $C_n(1,s)$.   Denote
\be
\label{eq:ka}
\kappa(a) := a+\frac{\epsilon(s)+\epsilon(q)}{2}.
\ee

\begin{thm}
\label{thm:tr2}
The following hold:
 \begin{itemize} 
\item[\rm (a)]  if $2 \le s \le \sqrt{n-r + (\kappa(-2))^2} + \kappa(-2)$, then
\be
\label{eq:thw1}
    \frac{n^2 - \epsilon(n)}{8(s+1)} \leq  
\aw(C_n(1,s)) \leq 
\frac{s+2}{24}\left(6q^2   + 3 q (s+4) +  s (4s+10) +  \epsilon(q) (2q + 3s +3)\right)  ; 
\ee

\item[\rm (b)]  if $ \sqrt{n-r+(\kappa(-1))^2} +\kappa(-1) \le s \le \sqrt{n-r + (\kappa(0))^2} + \kappa(0)$, then
\be
\label{eq:thw2}
\frac{n^2 - \epsilon(n)}{8(s+1)} \leq  \aw(C_n(1,s))\le
\frac{ q(q+2)( 5q+2)}{24} + \frac{s (s+2) (2s+5)}{6}+\epsilon(q)\frac{5 q^2 + 13 q + 7}{8};
\ee

\item[\rm (c)] if $\sqrt{n-r+(\kappa(1))^2}+ \kappa(1) \le s  < n/2$, then 
\be
\label{eq:thw3}
\max\left\{ \frac{n^2 - \epsilon(n)}{8(s+1)},\frac{(n-1)(\sqrt{2n}-7)^3}{24n} \right\}
 \leq  
\aw(C_n(1,s)) \hspace{1in} $$ $$ \hspace{1in}\le
 \frac{q(q+2) (q+10)}{24} + \frac{s(s+2)(q+1)}{2}  +\epsilon(q)\frac{(q+5)^2 + 4 (s+1)^2 }{8}. 
\ee
\end{itemize}
Moreover, the same lower and upper bounds are valid if $\aw(C_n(1,s))$ is replaced by $w(C_n(1,s))/2$ in (\ref{eq:thw1})-(\ref{eq:thw3}). 
\end{thm}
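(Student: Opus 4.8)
The plan splits into the (easy) lower bounds and the (substantial) upper bounds, the latter reusing the routing of Construction~\ref{const}.

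For the lower bounds I would do no new work. By~(\ref{eq:piw}) we have $\aw(C_n(1,s))\ge\api(C_n(1,s))$ and $w(C_n(1,s))\ge\pi(C_n(1,s))$, hence also $w(C_n(1,s))/2\ge\pi(C_n(1,s))/2$. The quantity $(n^2-\epsilon(n))/8(s+1)$ is exactly the lower bound on $\pi(C_n(1,s))/2$ established for Theorem~\ref{thm:tr1}, and it is valid for every $s$ with $1<s<n/2$; in the large-$s$ regime the extra term $(n-1)(\sqrt{2n}-7)^3/24n$ appearing in~(\ref{eq:thw3}) is likewise the lower bound supplied by~(\ref{eq:tf3}). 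The three $s$-ranges in the present theorem only refine the cut-offs $\sqrt n-1,\sqrt n,\sqrt n+1$ of Theorem~\ref{thm:tr1} by the $\kappa$-adjustments, so the appropriate bound of Theorem~\ref{thm:tr1} still applies on each range. Chaining $\aw\ge\api\ge L$ and $w/2\ge\pi/2\ge L$ then yields every lower bound in~(\ref{eq:thw1})--(\ref{eq:thw3}) simultaneously, and the lower-bound half of the final sentence is immediate.

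The upper bounds are the heart of the matter, and I would obtain them by exhibiting an explicit arc-conflict-free colouring of the routing $R$ of Construction~\ref{const}. The key structural fact is that $R$ is invariant under the translations $x\mapsto x+g$, so $P_{x,y}=P_{0,\,y\ominus x}+x$ and every path is a translate of a canonical path $P^{(d)}:=P_{0,d}$, $d\in\ZZ_n\setminus\{0\}$. Writing $d$ (or $n\ominus d$) in the form $as\pm b$ dictated by the decomposition $n=qs+r$ presents each $P^{(d)}$ as a monotone staircase using a prescribed number of $\pm 1$-arcs and $\pm s$-arcs; thus the displacements are indexed by a roughly $q\times s$ grid of pairs $(a,b)$, which is the source of the $q^2$, $qs$ and $s^2$ terms in~(\ref{eq:thw1})--(\ref{eq:thw3}). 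Two translates $P^{(d)}+x$ and $P^{(d')}+x'$ conflict precisely when they share an arc, and for fixed $d,d'$ this depends only on $x'\ominus x$, so the conflict graph is a union over displacement-types of circulant conflict patterns on $\ZZ_n$. I would colour type by type, giving each type a block of colours and, within a block, spreading its $n$ translates so that equal-coloured translates are arc-disjoint (their offsets avoiding the relevant difference set); the number of colours a single type needs matches the load it puts on its heaviest arc. Summing these contributions over the grid, while \emph{reusing} colours across types whose staircases occupy disjoint arc-ranges, produces the closed forms, and the parity corrections carried by $\epsilon(s),\epsilon(q)$ (packaged in $\kappa$) come from the middle row/column of the grid, where a displacement meets its negative or a half-step occurs. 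The case split into (a),(b),(c) is governed by how $s$ compares with $\sqrt{n-r+\kappa(a)^2}+\kappa(a)$, i.e.\ by whether the grid is wider than it is tall.

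The hard part will be the bookkeeping of \emph{inter-type} conflicts: paths of different displacements may traverse the same $\pm1$- or $\pm s$-arc, so the per-type colour blocks cannot be chosen independently. I would fix a linear order on the types according to where their staircases sit in the grid and verify that the greedy reuse of colours across types never creates a monochromatic arc-collision; making this ordering tight is exactly what forces the thresholds to land at the $\kappa$-adjusted values of $s$ and what pins down the exact (rather than merely asymptotic) constants. Finally, for the undirected index I would pass from the arc-conflict-free colouring to an edge-conflict-free one: the only remaining conflicts are between paths traversing a common edge in opposite directions, and since each path is monotone in both grid coordinates, two such paths differ in horizontal travel on a $\pm1$-edge and in vertical travel on a $\pm s$-edge. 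These are removed at the cost of a factor $2$ by a direct refinement of the colouring according to orientation, the verification that the factor is exactly $2$ (rather than larger) resting on the monotonicity of the staircase paths. Hence $w(C_n(1,s),R)\le 2\,\aw(C_n(1,s),R)$, so $w(C_n(1,s))/2$ obeys the same upper bounds, which completes the theorem.
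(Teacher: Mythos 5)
Your lower-bound argument is exactly the paper's: the bounds in (\ref{eq:thw1})--(\ref{eq:thw3}) are quoted from Theorem \ref{thm:tr1} via (\ref{eq:piw}), so that half is fine and needs no further comment.

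For the upper bounds your overall strategy --- exploit translation-invariance of Construction \ref{const}, group the paths into displacement types $(i,j)$ indexed by a roughly $q\times s$ grid, colour type by type, and pay a factor $2$ to pass from arc- to edge-conflict-freeness --- is the same as the paper's, but the two steps you yourself flag as delicate are precisely where the proposal has genuine gaps. First, you assert that a single type needs only as many colours as its maximum arc load, i.e.\ $\max\{|i|,j\}$. This amounts to claiming that the circulant conflict graph on $\ZZ_n$ with connection set $\{\pm1,\dots,\pm(j-1)\}\cup\{\pm s,\dots,\pm(|i|-1)s\}$ is $\max\{|i|,j\}$-colourable, which is unproven and in general fails because of wrap-around; the paper's Construction \ref{colouring} instead spends $4j$ colours on a type with $j\ge|i|$ and $2|i|+j$ colours when $|i|>j$, and that constant-factor surplus is forced by exactly these wrap-around collisions (hence the split on $x\le n/2$ versus $x>n/2$ and on the parity of $x_\alpha$ in Step 1, and the six subcases in Step 2). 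Second, your greedy reuse of colours across types is neither carried out nor needed: the paper gives distinct types pairwise disjoint colour sets by making $(i,j)$ part of the colour, identifying only $(i,j)$ with $(-i,-j)$, and the stated closed forms are literally the disjoint sum $\sum 4j+\sum(2|i|+j)$ over the grid as in (\ref{eq:sumf}). With your smaller per-type counts and cross-type reuse the totals would not reproduce the formulas in (\ref{eq:thw1})--(\ref{eq:thw3}); and the $\kappa$-thresholds do not come from tightness of any greedy ordering but from comparing $\lfloor s/2\rfloor$ with $\lceil q/2\rceil$ to decide which of the two per-type counts dominates the sum. Your factor-$2$ step for $w$ is essentially right (within a type, sharing an edge forces sharing an arc, so only the identification of $(i,j)$ with $(-i,-j)$ must be undone), but as it stands the core of the upper bound --- an explicit conflict-free colouring achieving the stated counts --- is missing.
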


Let $\mathsf{ratio}$ denote the ratio of the upper bound to the lower bound in the same equation above.
In (\ref{eq:thw1}), $\mathsf{ratio} \le    2(1+\frac{1}{s})(1+\frac{2}{s}) \left(1+ \frac{s^2}{  2n} + \frac{2s^4}{3 n^2} \right)\frac{n^2}{n^2-\epsilon(n)}+O\left(\frac{1}{\sqrt{n}}\right)$, which is $2(1+\frac{1}{s})(1+\frac{2}{s})$ asymptotically when $s^2 = o(n)$. When $s^2 = \Omega(n)$, in (\ref{eq:thw1}) we have $\mathsf{ratio} \le 13/3$  asymptotically since $s \le \sqrt{n-r + (\kappa(-2))^2} + \kappa(-2)$. 
In (\ref{eq:thw2}),  $\mathsf{ratio} \le 13/3$  asymptotically  as $s\approx q \approx \sqrt{n}$ in case (b).   Let  
\be
\label{eq:delta}
\delta(n) := \smid-1. 
\ee
  Then $\delta(n) + 1 > 3\sqrt{n}/2\sqrt{2} > 1$ if $n \ge 25$, $\delta(n) \approx 3\sqrt{n}/2\sqrt{2}$ as $n \rightarrow \infty$, and $\delta(n) +1 \le 91\sqrt{n}/80$ for sufficiently large $n$.  
If $s \le \delta(n)$, then the lower bound in (\ref{eq:thw3}) is equal to $(n^2 - \epsilon(n))/8(s+1)$ and $\mathtt{ratio}\le \frac{q (q+2)(q+10)(s+1)}{3 (n^2-\epsilon(n))} + \frac{4 (s+1) (s+2)(n-r+s)}{ n^2-\epsilon(n)} + O(\frac{1}{\sqrt{n}})$. This upper bound increases with $s$ and approaches a constant between $13/3$ and $4.85$, depending on the value of $s$, as $n \rightarrow \infty$. The upper bounds in (\ref{eq:thw1})-(\ref{eq:thw3}) will be proved by giving a specific colouring (see Construction \ref{colouring}) of the routing obtained from Construction \ref{const}, and this can be viewed as an approximation algorithm for the routing and wavelength assignment problem and its oriented version for $C_n(1, s)$. Thus the discussion above yields the following corollary of Theorem \ref{thm:tr2}.

\begin{cor}
\label{cor:tr2}
There is a $4.85$-factor approximation algorithm to solve the routing and wavelength assignment problem and its oriented version for 4-regular circulant graphs $C_n(1, s)$ with $n$ sufficiently large and $3 \le s \le  3\sqrt{n}/2\sqrt{2} - 1$.
\end{cor}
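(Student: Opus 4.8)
The plan is to exhibit the algorithm explicitly and then read off its performance ratio from the two bounds of Theorem~\ref{thm:tr2}, together with the case-by-case estimates of $\mathsf{ratio}$ recorded immediately before the statement. The algorithm is the constructive one underlying those upper bounds: given $C_n(1,s)$, first build the all-to-all routing $R$ of Construction~\ref{const} and then colour its paths by Construction~\ref{colouring}. Since both constructions are prescribed by closed-form rules in the parameters $q$, $r$, $s$ (each obtained from $n$ and $s$ by a single division), the routing of $n(n-1)$ paths and the colouring of those paths are produced in time polynomial in $n$; I would make this explicit by bounding the work per path and per colour class.

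First I would observe that the colouring produced is a valid arc-conflict-free (respectively edge-conflict-free) colouring of $R$, so the number of colours it uses is at most the upper bound in the relevant line of (\ref{eq:thw1})--(\ref{eq:thw3}); for the undirected version the final sentence of Theorem~\ref{thm:tr2} gives the same bound for $w(C_n(1,s))/2$, so the output is at most twice that upper bound. On the other hand $\aw(C_n(1,s))$ and $w(C_n(1,s))/2$ are at least the matching lower bounds in the same lines. Dividing the algorithm's output by the optimum, the performance ratio is at most $\mathsf{ratio}$ in every case, simultaneously for the oriented and undirected problems.

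It then remains to check $\mathsf{ratio}\le 4.85$ uniformly over $3\le s\le 3\sqrt{n}/2\sqrt{2}-1$ for all sufficiently large $n$, which I would carry out along the three regimes of Theorem~\ref{thm:tr2}. In case~(a) the recorded estimate is asymptotically $2(1+\tfrac{1}{s})(1+\tfrac{2}{s})$ when $s^2=o(n)$, and the hypothesis $s\ge 3$ keeps this at most $2\cdot\tfrac{4}{3}\cdot\tfrac{5}{3}=\tfrac{40}{9}<4.85$ (this is exactly why $s=2$ must be excluded), while for $s^2=\Omega(n)$ the bound $13/3$ applies; case~(b) likewise gives $13/3$. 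Since $3\sqrt{n}/2\sqrt{2}-1<\delta(n)$ for $n\ge 25$, throughout case~(c) the lower bound equals $(n^2-\epsilon(n))/8(s+1)$ and the displayed bound on $\mathsf{ratio}$ is increasing in $s$, so its supremum over the allowed range is attained asymptotically at $s=3\sqrt{n}/2\sqrt{2}-1$.

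The main obstacle is this last endpoint estimate. Writing $c:=s/\sqrt{n}$ and using $q\approx n/s$, $0\le r<s$, the case-(c) ratio $\frac{q(q+2)(q+10)(s+1)}{3(n^2-\epsilon(n))}+\frac{4(s+1)(s+2)(n-r+s)}{n^2-\epsilon(n)}+\Bigo(1/\sqrt{n})$ collapses, on tracking only leading terms, to $\tfrac{1}{3c^2}+4c^2+\Bigo(1/\sqrt{n})$. On the interval relevant to case~(c), namely $c\gtrsim 1$, this function is increasing, and at $c=3/2\sqrt{2}$ (so $c^2=9/8$) it equals $\tfrac{8}{27}+\tfrac{9}{2}\approx 4.80<4.85$. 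Verifying this single computation, and checking that the neglected lower-order terms do not push the limiting value above $4.85$, is precisely what pins down both the constant $4.85$ and the upper cutoff $3\sqrt{n}/2\sqrt{2}-1$ on $s$.
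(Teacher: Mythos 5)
Your proposal is correct and follows essentially the same route as the paper: the algorithm is the routing of Construction \ref{const} followed by the colouring of Construction \ref{colouring}, and the $4.85$ bound is read off from the case-by-case estimates of $\mathsf{ratio}$ preceding Corollary \ref{cor:tr2}, with the worst case at the endpoint $s\approx 3\sqrt{n}/2\sqrt{2}$ where $c^2=9/8$ gives $\tfrac{1}{3c^2}+4c^2=\tfrac{8}{27}+\tfrac{9}{2}\approx 4.80<4.85$. Your explicit endpoint computation (and the observation that $s\ge 3$ is needed to keep the case-(a) value $2(1+\tfrac1s)(1+\tfrac2s)$ below $4.85$) simply makes concrete what the paper asserts without detail.
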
 

If $\delta(n) < s < n/2$, then the lower bound in (\ref{eq:thw3}) is equal to $(n-1)(\sqrt{2n}-7)^3/24n$ and the $\mathsf{ratio}$ from (\ref{eq:thw3}) (and so the approximation ratio of the algorithm from Construction \ref{colouring}) is at most $\frac{n} {(n - 1)}\left(\frac{ q(q+2) (q+10)}{(\sqrt{2n}-7 )^3} + \frac{s(s+2)(q+1)+\epsilon(q) s^2 }{2(\sqrt{2n}-7 )^3}\right)+ O(\frac{1}{\sqrt{n}})$. This upper bound increases with $s$ and is $O(\sqrt{n})$ in the worst case scenario when $s \approx cn$ for some constant $c < 1/2$. However, in the case when $r \le q$ or $r+q \ge s+1$, our stronger lower bound on $w(C_n(1,s))$ obtained from (\ref{eq:lwr3}) and (\ref{eq:piw}) implies that the approximation ratio is at most $4(1/c+ \epsilon(q))$ asymptotically. 

The rest of the paper is organised as follows. 	 
We will prove the lower bounds in the theorems above in the next section. In Section \ref{sec:ub} we will establish the upper bounds in Theorem \ref{thm:tr1} by devising a specific routing (Construction \ref{const}). In Section \ref{sec:wave} we will prove the upper bounds in Theorem \ref{thm:tr2} by giving a specific colouring (Construction \ref{colouring}) for the routing obtained from Construction \ref{const}. Note that the lower bounds in (\ref{eq:thw1})-(\ref{eq:thw3}) are obtained from (\ref{eq:piw}) and the lower bounds in Theorem \ref{thm:tr1}.

\section{Lower bounds}
\label{sec:lwrs}

\subsection{Two lower bounds}
\label{subsec:two}

Given a graph $G$ and $U \subset V(G)$, let $\delta(U)$ denote the set of edges of $G$ with one end in $U$ and the other end in $\overline{U} = V(G) \setminus U$. Let $R^*$ be a routing of $G$ such that $\pi(G)=\pi(G, R^*)$. Then $\pi(G)$ is the maximum load on an edge of $G$ under $R^*$. The total load on the edges of $\delta(U)$ under $R^*$ is thus at most $\pi(G)|\delta(U)|$. On the other hand, there are exactly $2 |U| |\overline{U}|$ paths in $R^*$ with one end in $U$ and the other end in $\overline{U}$, and each of them uses at least one edge of $\delta(U)$. Therefore, 
\begin{equation}
\label{eq:is2}
\pi(G)|\delta(U)| \geq 2 |U| |\overline{U}|.
\end{equation}  
 	 
\begin{lem}
\label{lem:lwr1}
$$
\pi(C_n(1,s)) \ge \frac{\lfloor n/2\rfloor\lceil n/2\rceil}{s+1} = \frac{n^2 - \epsilon(n)}{4(s+1)}.
$$ 
\end{lem}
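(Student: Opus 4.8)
The plan is to apply the isoperimetric-type bound (\ref{eq:is2}) to a single well-chosen vertex set $U$, namely a block of consecutive residues, and to show that this block has a small boundary relative to its size. Concretely, I would take
\[
U = \{0, 1, \ldots, \lfloor n/2\rfloor - 1\},
\]
so that $|U| = \lfloor n/2\rfloor$ and $|\overline{U}| = \lceil n/2\rceil$, hence $|U||\overline{U}| = \lfloor n/2\rfloor\lceil n/2\rceil$. Substituting into (\ref{eq:is2}) then reduces the lemma to establishing the single edge count $|\delta(U)| = 2(s+1)$.

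The heart of the argument is therefore counting $\delta(U)$, which I would carry out by type of connection. The $\pm1$-edges leaving $U$ are exactly the two arc-ends $\{\lfloor n/2\rfloor-1,\, \lfloor n/2\rfloor\}$ and $\{n-1,\, 0\}$, contributing $2$. For the $\pm s$-edges I would count, as $j$ ranges over $\ZZ_n$, whether the edge $\{j, j\oplus s\}$ has exactly one endpoint in $U$; since each $s$-edge is listed exactly once in this enumeration (using $2s\not\equiv 0\pmod n$, which holds because $s\neq n/2$), every crossing edge is counted once. Splitting into the cases $j\in U$ with $j\oplus s\notin U$ and $j\notin U$ with $j\oplus s\in U$, and using $s<n/2$ to track the wraparound, I expect exactly $s$ edges of each kind (the ``top'' boundary $j\in\{\lfloor n/2\rfloor-s,\ldots,\lfloor n/2\rfloor-1\}$ and the ``bottom'' wrapped boundary $j\in\{n-s,\ldots,n-1\}$), giving $2s$ in all. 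Hence $|\delta(U)| = 2 + 2s = 2(s+1)$.

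The inequalities that make the wraparound bookkeeping clean are $s\le\lfloor n/2\rfloor$ and $s\le\lceil n/2\rceil$, both consequences of $1<s<n/2$; these guarantee that for $j\in U$ the sum $j+s$ never exceeds $n-1$, so the two ends of the interval contribute disjoint edge sets and no $s$-edge is double counted. With $|\delta(U)| = 2(s+1)$ and $|U||\overline{U}| = \lfloor n/2\rfloor\lceil n/2\rceil$ in hand, (\ref{eq:is2}) yields $\pi(C_n(1,s)) \ge \lfloor n/2\rfloor\lceil n/2\rceil/(s+1)$. I would finish by recording the elementary identity $\lfloor n/2\rfloor\lceil n/2\rceil = (n^2-\epsilon(n))/4$ (checking the even and odd cases of $n$ separately) to obtain the stated closed form.

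The main obstacle is precisely the exact count of the $s$-jump edges across the boundary; everything else is a direct substitution. The subtlety is purely combinatorial---correctly handling the cyclic wraparound near residue $0$ and confirming that $s<n/2$ prevents the two ends of the interval from interfering---rather than anything analytically deep.
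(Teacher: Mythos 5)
Your proposal is correct and follows essentially the same route as the paper: the same cut $U=\{0,\dots,\lfloor n/2\rfloor-1\}$, the same isoperimetric inequality (\ref{eq:is2}), and the same count $|\delta(U)|=2s+2$ (the paper enumerates the boundary via the neighbours $i\pm s$ of $i\in U$ rather than over all $j\in\ZZ_n$, but this is only a cosmetic difference). Your attention to the wraparound and to $2s\not\equiv 0\pmod n$ matches the role played by the hypothesis $s<n/2$ in the paper's argument.
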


\begin{proof}
We apply (\ref{eq:is2}) to $C_n(1,s)$. Choose $U=\{0,\dots,\lfloor n/2\rfloor-1\}\subset\mathbb{Z}_n$ so that $|U|=\lfloor n/2\rfloor$ and $|\overline{U}|=\lceil n/2\rceil $. Consider the neighbours $i+s$, $i-s$ of $i \in U$. We have: $\lfloor n/2\rfloor \le i+s \le n-1$ if and only if $\lfloor n/2\rfloor-s\le i\le \lfloor n/2\rfloor-1$, and $i - s$ ($\equiv n-(s-i) \mod n$) lies in $\overline{U}$ if and only if $0\le i\le s-1$. Thus $\delta(U)$ consists of edges $\{i, i+s\}$ ($\lfloor n/2\rfloor-s\le i\le \lfloor n/2\rfloor-1$), $\{i,i-s\}$ ($0\le i\le s-1$), $\{0,n-1\}$ and $\left\{\lfloor n/2\rfloor-1,\lfloor n/2\rfloor\right\}$. Hence $|\delta(U)|=2s+2$. This together with (\ref{eq:is2}) yields $\pi\big(C_n(1,s)\big) \geq \lfloor n/2\rfloor\lceil n/2\rceil/(s+1)$. 
\end{proof}

As observed in  \cite{Heydemann1989}, we have 
	\begin{equation}
	\label{eq:trvl}
	\pi(G)\geq\frac{\sum_{(x, y)\in V(G) \times V(G)}d(x, y)}{|E(G)|} = \frac{n(n-1)\overline{d}(G)}{|E(G)|},
	\end{equation}
where $d(x, y)$ is the distance between $x$ and $y$ in $G$ and $\overline{d}(G)$ is the mean distance among all unordered pairs of vertices of $G$. It was proved in \cite[Theorem 4.6]{Hwang2003} that, if $G$ is a circulant network of order $n$ and degree $4$, then
$$
\overline{d}(G) \geq\frac{(\sqrt{2n}-7)^3}{6n}.
$$
Applying this and (\ref{eq:trvl}) to $C_n(1,s)$, we obtain:
	
	\begin{lem}
	\label{lem:lwr2}
$$
\pi(C_n(1,s))\geq \frac{(n-1)(\sqrt{2n}-7)^3}{12n}.
$$
\end{lem}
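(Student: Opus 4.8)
The plan is to combine, by direct substitution, the two facts recorded immediately above the statement: the general lower bound (\ref{eq:trvl}) expressing $\pi(G)$ through the mean distance $\overline{d}(G)$ and the edge count $|E(G)|$, together with the cited bound from \cite{Hwang2003} that lower-bounds $\overline{d}(G)$ for every circulant network of order $n$ and degree $4$. Since $C_n(1,s)$ is exactly such a network, both ingredients apply verbatim, and the proof reduces to correct bookkeeping of the number of edges plus an arithmetic simplification.

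First I would note that $C_n(1,s)$ is $4$-regular on $n$ vertices, so $|E(C_n(1,s))| = 4n/2 = 2n$. Feeding this into (\ref{eq:trvl}) gives
$$
\pi(C_n(1,s)) \ge \frac{n(n-1)\,\overline{d}(C_n(1,s))}{2n} = \frac{(n-1)\,\overline{d}(C_n(1,s))}{2}.
$$
I would then invoke $\overline{d}(C_n(1,s)) \ge (\sqrt{2n}-7)^3/(6n)$, legitimate because $C_n(1,s)$ has order $n$ and degree $4$, to conclude
$$
\pi(C_n(1,s)) \ge \frac{n-1}{2}\cdot\frac{(\sqrt{2n}-7)^3}{6n} = \frac{(n-1)(\sqrt{2n}-7)^3}{12n},
$$
which is precisely the claimed inequality.

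There is no substantive obstacle at this stage: the only content is computing $|E(C_n(1,s))| = 2n$ and simplifying. The genuinely difficult part, namely the geometric lower bound on the mean distance of a degree-$4$ circulant graph, is established in \cite{Hwang2003} and is used here as a black box. The lemma is therefore best viewed as the specialization of the generic edge-load estimate (\ref{eq:trvl}) to the $4$-regular circulant case, made effective by importing the external mean-distance bound.
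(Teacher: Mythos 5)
Your proof is correct and is essentially identical to the paper's: the authors likewise substitute $|E(C_n(1,s))|=2n$ and the mean-distance bound $\overline{d}(G)\ge(\sqrt{2n}-7)^3/(6n)$ from \cite[Theorem 4.6]{Hwang2003} into (\ref{eq:trvl}). Nothing further is needed.
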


It can be verified that the lower bound in Lemma \ref{lem:lwr1} is no less than that in Lemma \ref{lem:lwr2} if and only if $s \le 3n(n^2 - \epsilon(n))/(n-1)(\sqrt{2n}-7)^3 -1$. Thus the lower bounds in (\ref{eq:tf1})-(\ref{eq:tf3}) follow from (\ref{eq:pi}) and Lemmas \ref{lem:lwr1} and \ref{lem:lwr2} immediately.

\subsection{Proof of Theorem \ref{thm:lwr3}}
\label{subsec:rq}

To prove Theorem \ref{thm:lwr3} we need two results from \cite{Zerovnik1993}. 
Let $\ZZ^2$ be the $2$-dimensional $\ZZ$-module lattice. Define $l:\ZZ^2 \rightarrow \ZZ_n$ by 
$$
l(\mathbf{x}):=x_1\oplus x_2 s,\;\, \mathbf{x}=(x_1,x_2) \in \ZZ^2.
$$
We may view $l$ as a labelling that labels each point $(x_1, x_2)$ of $\ZZ^2$ by the node $x_1 \oplus x_2 s$ of $C_n(1,s)$. We observe that when two points $(x_1,x_2)$ and $(y_1,y_2)$ are neighbours in the lattice $\ZZ^2$ (that is, either $x_1=y_1$ and $|x_2-y_2|=1$, or $|x_1-y_1|=1$ and $x_2=y_2$), the corresponding labels $l(x_1,x_2)$ and $l(y_1,y_2)$ are adjacent nodes of $C_n(1, s)$. 

Denote by $\norm{.}$ the $L_1$-norm in $\ZZ^2$ defined by 
$$
{\norm{\mathbf{x}}} := |x_1|+|x_2|,\;\, \mathbf{x}=(x_1,x_2) \in \ZZ^2.
$$ 
The {\em length} of a path $\mathbf{x}_0, \mathbf{x}_1,\dots,\mathbf{x}_k$ in $\ZZ^2$,  connecting $\mathbf{x}_0$ and $\mathbf{x}_k$ is defined as $k$, and the {\em distance} between two points of  $\ZZ^2$ is defined to be the length of a shortest path in $\ZZ^2$ connecting them, where $\mathbf{x}_{i-1}$ and $\mathbf{x}_{i}$ are neighbours in the lattice. Thus the distance between $\mathbf{x}$ and $\mathbf{y}$ in $\ZZ^2$ is equal to $\norm{\mathbf{x}-\mathbf{y}}$ \cite{Zerovnik1993}. 

Each path $\mathbf{x}_0, \mathbf{x}_1,\dots,\mathbf{x}_k$ in $\ZZ^2$ gives rise to the oriented path $l(\mathbf{x}_0), l(\mathbf{x}_1), \ldots, l(\mathbf{x}_k)$ in $C_n(1, s)$. Note that even if the former is a shortest path in $\ZZ^2$, the latter is not necessarily a shortest path in $C_n(1, s)$. 

Denote by $X$ the set of points of $\ZZ^2$ with label $0$. That is, 
$$
X:=\left\{(x_1,x_2)\in\ZZ^2: l(x_1,x_2) = 0\right\}.
$$  
Note that $X$ relies on $s$ implicitly. A \emph{basis} for $X$ is a set of two independent vectors $\left\{\mathbf{a},\mathbf{b}\right\}$ in $X$ such that any vector in $X$ is a linear combination of them with integer coefficients. The {\em parallelogram} \cite{Zerovnik1993} generated by a basis $\{\mathbf{a},\mathbf{b}\}$ is defined as  
$$
[\mathbf{a},\mathbf{b}] := \left\{\mathbf{x} \in\ZZ^2: \mathbf{x}=\alpha \mathbf{a}+\beta\mathbf{b},0\leq\alpha,\beta \le 1\right\}.
$$ 
Note that its corner points, $\mathbf{0}, \mathbf{a},\mathbf{b}$ and $\mathbf{a+b}$, are in $X$. 
Similarly, the {\em half-open parallelogram} generated by $\{\mathbf{a},\mathbf{b}\}$ is defined as
$$
[\mathbf{a},\mathbf{b}):=\left\{\mathbf{x} \in\ZZ^2: \mathbf{x}=\alpha \mathbf{a}+\beta\mathbf{b},0\leq\alpha,\beta<1\right\}.
$$   
In what follows we use $d(i, j)$ to denote the distance in $C_n(1, s)$ between $i \in \ZZ_n$ and $j \in \ZZ_n$. It is observed in \cite{Zerovnik1993} that, for every $\mathbf{v}\in\ZZ^2$, we have 
\be
\label{eq:dl}
d\big(0,l(\mathbf{v})\big) := \norm{\mathbf{v}-X}:=\min\left\{\norm{\mathbf{v-x}}: \mathbf{x}\in X\right\}.
\ee

\begin{lem} (\cite[Proposition 2]{Zerovnik1993})
\label{prop:conta}
Let $\{\mathbf{a},\mathbf{b}\}$ be a basis for $X$. Then $[\mathbf{a},\mathbf{b})$ has exactly $n$ points and each label in $\{0,1,\dots,n-1\}$ appears exactly once as $l(\mathbf{x})$ for some $\mathbf{x} \in [\mathbf{a},\mathbf{b})$.
\end{lem}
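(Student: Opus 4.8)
The plan is to prove Lemma~\ref{prop:conta} by a standard lattice-and-covering argument, exploiting the fact that $X$ is a full-rank sublattice of $\ZZ^2$ together with the surjectivity of the label map $l$ onto $\ZZ_n$. First I would observe that $X = \ker l$ is indeed a subgroup (sublattice) of $\ZZ^2$: since $l$ is a group homomorphism from $\ZZ^2$ (under coordinatewise addition) into $\ZZ_n$, given by $l(\mathbf{x}) = x_1 \oplus x_2 s$, its kernel $X$ is closed under addition and negation. The map $l$ is surjective because $l(1,0) = 1$ generates $\ZZ_n$, so by the first isomorphism theorem $\ZZ^2 / X \cong \ZZ_n$, which already forces the index $[\ZZ^2 : X] = n$. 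This is the conceptual heart of the counting: the number of points in a fundamental domain of $X$ equals the index $n$.

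Next I would argue that the half-open parallelogram $[\mathbf{a},\mathbf{b})$ is a fundamental domain for $X$, i.e. a complete set of coset representatives for $\ZZ^2 / X$. Given a basis $\{\mathbf{a},\mathbf{b}\}$ for $X$, every point $\mathbf{v} \in \ZZ^2$ can be written uniquely as $\mathbf{v} = \mathbf{u} + m\mathbf{a} + n\mathbf{b}$ with $m, n \in \ZZ$ and $\mathbf{u} \in [\mathbf{a},\mathbf{b})$; the existence and uniqueness come from writing $\mathbf{v}$ in the (rational) basis $\{\mathbf{a},\mathbf{b}\}$ as $\mathbf{v} = \alpha \mathbf{a} + \beta \mathbf{b}$ and then subtracting off the integer parts $\lfloor \alpha \rfloor, \lfloor \beta \rfloor$ so that the residual coefficients lie in $[0,1)$. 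Since $\mathbf{a}, \mathbf{b} \in X$, this shows each point of $\ZZ^2$ is congruent modulo $X$ to exactly one point of $[\mathbf{a},\mathbf{b})$, so $[\mathbf{a},\mathbf{b})$ meets every coset of $X$ exactly once and therefore contains exactly $[\ZZ^2 : X] = n$ lattice points.

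Finally, to conclude the statement about labels, I would combine the two facts. Two points $\mathbf{x}, \mathbf{y} \in \ZZ^2$ satisfy $l(\mathbf{x}) = l(\mathbf{y})$ if and only if $\mathbf{x} - \mathbf{y} \in X$, i.e.\ if and only if they lie in the same coset of $X$. Since the $n$ points of $[\mathbf{a},\mathbf{b})$ represent the $n$ distinct cosets, their labels $l(\mathbf{x})$ are pairwise distinct; as $l$ is surjective onto the $n$-element set $\{0,1,\dots,n-1\}$, these $n$ distinct labels must be exactly $\{0,1,\dots,n-1\}$, each occurring once. I expect the main point requiring care—rather than a genuine obstacle—to be verifying cleanly that $[\mathbf{a},\mathbf{b})$ contains precisely the integer lattice points forming a fundamental domain (as opposed to the real-coordinate region), and equivalently that a basis $\{\mathbf{a},\mathbf{b}\}$ of the sublattice $X$ has fundamental-domain index equal to $n$; one should confirm $X$ has full rank $2$ (which follows since $\ZZ^2/X$ is finite of order $n$), so that $\{\mathbf{a},\mathbf{b}\}$ genuinely spans a rank-$2$ lattice and the parallelogram is nondegenerate.
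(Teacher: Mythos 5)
Your argument is correct and complete: $l$ is a surjective homomorphism $\ZZ^2\to\ZZ_n$ with kernel $X$, so $[\ZZ^2:X]=n$; the half-open parallelogram on a lattice basis of $X$ is a transversal of the cosets (via subtracting integer parts of the coordinates in the $\mathbb{Q}$-basis $\{\mathbf{a},\mathbf{b}\}$), hence contains exactly $n$ lattice points whose labels are pairwise distinct and therefore exhaust $\{0,1,\dots,n-1\}$. Note that the paper does not prove this lemma at all -- it is quoted verbatim from \cite[Proposition 2]{Zerovnik1993} -- so there is no in-paper argument to compare against; your proof is the standard fundamental-domain argument and fills that gap correctly. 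Only a cosmetic remark: you reuse the symbol $n$ both for the integer coefficient of $\mathbf{b}$ and for the order of $\ZZ_n$, which should be renamed to avoid a clash.
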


Thus the labelling $l$ induces a bijection between the points in $[\mathbf{a},\mathbf{b})$ and the nodes in $\ZZ_n$. This together with (\ref{eq:dl}) implies that for any $i \in \ZZ_n$,  
$d(0, i) = \norm{\mathbf{v}_i-X}$, where $\mathbf{v}_i$ is the unique point in $[\mathbf{a},\mathbf{b})$ with $l(\mathbf{v}_i) = i$. The next lemma says that, if $\{\mathbf{a},\mathbf{b}\}$ is a packed basis, then $\norm{\mathbf{v}_i-X}$ is attained at a corner point of $[\mathbf{a},\mathbf{b}]$, where a basis $\{\mathbf{a,b}\}$ is \emph{packed}
\cite{Zerovnik1993} if it satisfies 
\be
\label{eq:packed}
\max\{\norm{\mathbf{a}},\norm{\mathbf{b}}\}\leq\min\{\norm{\mathbf{a-b}},\norm{\mathbf{a+b}}\}.
\ee   

\begin{lem}  
(\cite[Lemma 2]{Zerovnik1993})
\label{lem:dist}
Let $\{\mathbf{a},\mathbf{b}\}$ be a packed basis for $X$. Then, for any $\mathbf{v} \in [\mathbf{a},\mathbf{b}]$, we have
$$
\norm{\mathbf{v} - X} = \min 
\{{\norm{\mathbf{v} - \mathbf{x}}: \mathbf{x}} = \mathbf{0}, \mathbf{a}, \mathbf{b}, \mathbf{a} + \mathbf{b}\}.
$$
\end{lem}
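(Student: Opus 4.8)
The plan is to translate the statement into a pure lattice-geometry fact about $X=\ZZ\mathbf a+\ZZ\mathbf b$ under the $L_1$ norm and then exploit the packed condition (\ref{eq:packed}). Write $\mathbf v=\alpha\mathbf a+\beta\mathbf b$ with $0\le\alpha,\beta\le1$ and a generic point of $X$ as $\mathbf x=m\mathbf a+k\mathbf b$ with $m,k\in\ZZ$; by the definition of $\norm{\mathbf v-X}$ appearing in (\ref{eq:dl}) we have $\norm{\mathbf v-X}=\min_{m,k}\norm{(\alpha-m)\mathbf a+(\beta-k)\mathbf b}$, and the four corners $\mathbf 0,\mathbf a,\mathbf b,\mathbf a+\mathbf b$ are exactly the points with $m,k\in\{0,1\}$. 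So it suffices to produce a minimiser with $m,k\in\{0,1\}$; equivalently, since $\norm{\cdot}$ is translation-invariant on $\ZZ^2$, to show that the closed fundamental parallelogram $[\mathbf a,\mathbf b]$ is covered by the four $L_1$-Voronoi cells of its corners.

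First I would fix a point $\mathbf x_0=m\mathbf a+k\mathbf b$ attaining $\norm{\mathbf v-X}$ and, among all such minimisers, one with $|m|+|k|$ least; set $\mathbf u=\mathbf v-\mathbf x_0=c_1\mathbf a+c_2\mathbf b$ with $c_1=\alpha-m$ and $c_2=\beta-k$. Minimality against the eight lattice neighbours $\mathbf x_0\pm\mathbf a,\ \mathbf x_0\pm\mathbf b,\ \mathbf x_0\pm(\mathbf a-\mathbf b),\ \mathbf x_0\pm(\mathbf a+\mathbf b)$ gives the inequalities $\norm{\mathbf u}\le\norm{\mathbf u\pm\mathbf a}$, $\norm{\mathbf u}\le\norm{\mathbf u\pm\mathbf b}$ and $\norm{\mathbf u}\le\norm{\mathbf u\pm(\mathbf a\pm\mathbf b)}$. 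Using the convexity of $t\mapsto\norm{\mathbf u+t\mathbf a}$ and $t\mapsto\norm{\mathbf u+t\mathbf b}$, the axis inequalities locate the one-dimensional minimisers and, combined with $\alpha,\beta\in[0,1]$, I would push $m$ and $k$ into $\{0,1\}$ by a descent step: whenever $m\ge2$ or $m\le-1$ (so $|c_1|\ge1$) I replace $\mathbf x_0$ by $\mathbf x_0\mp\mathbf a$, and symmetrically for $k$, checking that the replacement does not increase $\norm{\mathbf v-\cdot}$ while strictly decreasing $|m|+|k|$, contradicting the choice of $\mathbf x_0$. The packed inequality (\ref{eq:packed}) is what guarantees these single-generator moves always suffice, i.e.\ that one never has to descend along a diagonal direction.

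The crux, and the step I expect to be the main obstacle, is precisely the diagonal comparison. A lattice point that is far in the $(m,k)$-index can still be $L_1$-close to $\mathbf v$ if the cell of $X$ is badly skewed, and the only thing preventing this is that the diagonal neighbours $\mathbf a\pm\mathbf b$ are not shorter than the generators: by (\ref{eq:packed}), $\max\{\norm{\mathbf a},\norm{\mathbf b}\}\le\min\{\norm{\mathbf a-\mathbf b},\norm{\mathbf a+\mathbf b}\}$. I would make this quantitative by showing that for the nearest corner $\mathbf c$ the bisector inequalities $\norm{\mathbf v-\mathbf c}\le\norm{\mathbf v-(\mathbf c\pm(\mathbf a-\mathbf b))}$ and $\norm{\mathbf v-\mathbf c}\le\norm{\mathbf v-(\mathbf c\pm(\mathbf a+\mathbf b))}$ hold, reducing each to (\ref{eq:packed}) together with the triangle inequality and $\alpha,\beta\in[0,1]$. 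Once the diagonal neighbours are handled, the Voronoi-relevant vectors $\pm\mathbf a,\pm\mathbf b,\pm(\mathbf a-\mathbf b)$ (and possibly $\pm(\mathbf a+\mathbf b)$, depending on orientation) are the only ones whose bisectors can meet $[\mathbf a,\mathbf b]$, so the descent terminates inside $\{0,1\}^2$ and the minimum of $\norm{\mathbf v-\mathbf x}$ over $X$ is attained at a corner, as claimed.
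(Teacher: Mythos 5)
The paper does not actually prove this statement: it is imported verbatim as \cite[Lemma 2]{Zerovnik1993}, so there is no in-paper argument to compare yours against and your proposal has to stand on its own. As written it is a plan rather than a proof, and the two inequalities on which it rests are precisely the ones left unverified. Consider first the single-generator descent step. As literally stated (``whenever $m\ge 2$ \dots\ I replace $\mathbf{x}_0$ by $\mathbf{x}_0\mp\mathbf{a}$ \dots\ checking that the replacement does not increase $\norm{\mathbf{v}-\cdot}$''), it is not a consequence of packedness, $m\ge 2$ and $\alpha,\beta\in[0,1]$ alone: take $\mathbf{a}=(3,-1)$, $\mathbf{b}=(1,3)$ (a genuine packed basis, arising from $n=10$, $s=3$, $r=1$, $q=3$, with $\norm{\mathbf{a}}=\norm{\mathbf{b}}=4\le 6=\norm{\mathbf{a}\pm\mathbf{b}}$), $\mathbf{v}=\mathbf{a}+\mathbf{b}$ and $\mathbf{x}_0=2\mathbf{a}-9\mathbf{b}$; then $\norm{\mathbf{v}-\mathbf{x}_0}=\norm{(7,31)}=38$ while $\norm{\mathbf{v}-(\mathbf{x}_0-\mathbf{a})}=\norm{(10,30)}=40$, so the move $m\colon 2\to 1$ strictly increases the distance. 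Of course this $\mathbf{x}_0$ is not a minimiser, but that is exactly the point: the reverse inequality you need ($\norm{\mathbf{u}+\mathbf{a}}\le\norm{\mathbf{u}}$, which together with the minimality inequality $\norm{\mathbf{u}}\le\norm{\mathbf{u}+\mathbf{a}}$ would force equality and yield your contradiction) can only come from combining the \emph{global} minimality of $\mathbf{x}_0$ with (\ref{eq:packed}) and $\alpha,\beta\in[0,1]$, and that computation --- the entire content of the lemma --- is never carried out. You only assert that convexity ``locates the one-dimensional minimisers'' and that (\ref{eq:packed}) ``guarantees these single-generator moves always suffice''.

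The second gap is the appeal to Voronoi-relevant vectors. You acknowledge the diagonal comparison is ``the main obstacle'' and then defer it (``I would make this quantitative by showing \dots''), but even granting the eight bisector inequalities at the nearest corner $\mathbf{c}$, the conclusion does not follow for the $L_1$ norm: since $\norm{\cdot}$ is not strictly convex, being at least as close to $\mathbf{c}$ as to its eight lattice neighbours does not by itself imply $\norm{\mathbf{v}-\mathbf{c}}=\norm{\mathbf{v}-X}$. The claim that only $\pm\mathbf{a},\pm\mathbf{b},\pm(\mathbf{a}\pm\mathbf{b})$ can be relevant is essentially equivalent in strength to the lemma itself and needs its own argument --- in practice a direct case analysis over the sign pattern of $(m,k)$ using the triangle inequality, $\alpha,\beta\in[0,1]$ and (\ref{eq:packed}), which is how one would actually finish. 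So the overall strategy (reduce to the fundamental parallelogram and show the nearest lattice point has coefficients in $\{0,1\}^2$) is sound, but both key inequalities are left as declarations of intent, and one of them is stated in a form that is false without the minimality hypothesis. The gap is genuine.
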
 

\begin{lem}
\label{lem:pb} 
The following hold:
\begin{itemize}
\item[\rm (a)] if $r\le q$ and $2r\le s + 1$, then $\{(s,-1),(r,q)\}$ is a packed basis for $X$;
\item[\rm (b)] if $r\le q$ and $2r\ge s + 1$, then $\{(s,-1),(r-s, q+1)\}$ is a packed basis for $X$;
\item[\rm (c)] if $r\ge q$ and $r + q \ge s + 1$, then $\{(s,-1),(r-s,q+1)\}$ is a packed basis for $X$. 
\end{itemize}
\end{lem}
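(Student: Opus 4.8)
The plan is to establish, in each of the three cases, two logically separate facts about the candidate pair $\{\mathbf{a},\mathbf{b}\}$: first that it is a \emph{basis} for $X$, and second that it is \emph{packed} in the sense of (\ref{eq:packed}). These two tasks are essentially independent, the first being an index computation and the second a finite check of $L_1$-norm inequalities, so I would dispatch them in turn.

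For the basis claim I would first observe that $l:\ZZ^2\to\ZZ_n$ is a surjective group homomorphism (it is additive and $l(1,0)=1$ generates $\ZZ_n$), so $X=\ker l$ is a sublattice of index $n$ in $\ZZ^2$. Next I would check that each listed vector lies in $X$: $l(s,-1)=s-s=0$, $l(r,q)=r+qs=n\equiv 0$, and $l(r-s,q+1)=(r-s)+(q+1)s=r+qs=n\equiv 0$. Finally, in every case the determinant of the two generators equals $sq+r=n$: for $\{(s,-1),(r,q)\}$ it is $sq-(-1)r=sq+r$, and for $\{(s,-1),(r-s,q+1)\}$ it is $s(q+1)-(-1)(r-s)=sq+r$. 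Since a sublattice generated by two linearly independent integer vectors has index equal to the absolute value of this determinant, and here that sublattice is contained in the index-$n$ lattice $X$, multiplicativity of the index forces it to equal $X$; hence the pair is a basis.

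For the packed claim I would compute the four relevant norms explicitly, using $0\le r<s$ and $q\ge 2$ to resolve every absolute value. In case (a) this gives $\norm{\mathbf{a}}=s+1$, $\norm{\mathbf{b}}=r+q$, $\norm{\mathbf{a}-\mathbf{b}}=s-r+q+1$, $\norm{\mathbf{a}+\mathbf{b}}=s+r+q-1$; in cases (b) and (c), where the second generator is the same, it gives $\norm{\mathbf{a}}=s+1$, $\norm{\mathbf{b}}=s-r+q+1$, $\norm{\mathbf{a}-\mathbf{b}}=2s-r+q+2$, $\norm{\mathbf{a}+\mathbf{b}}=r+q$. The packed inequality $\max\{\norm{\mathbf{a}},\norm{\mathbf{b}}\}\le\min\{\norm{\mathbf{a}-\mathbf{b}},\norm{\mathbf{a}+\mathbf{b}}\}$ then splits into four scalar inequalities, and I would reduce each to one of the hypotheses $r\le q$, $2r\le s+1$, $2r\ge s+1$, $r\ge q$, $r+q\ge s+1$, or to the trivial bounds $s>1$, $q\ge 2$, $r\ge 0$.

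The only non-mechanical point, and the step I expect to need the most care, is that two of these scalar inequalities do not follow from a single hypothesis but require combining two of them. Specifically, in case (b) the inequality $\norm{\mathbf{a}}\le\norm{\mathbf{a}+\mathbf{b}}$, i.e. $s+1\le r+q$, follows by chaining $s-r+1\le (s+1)/2\le r\le q$, where the first two links use $2r\ge s+1$ and the last uses $r\le q$; and in case (c) the inequality $\norm{\mathbf{b}}\le\norm{\mathbf{a}+\mathbf{b}}$, i.e. $s+1\le 2r$, follows from $2r\ge r+q\ge s+1$, using $r\ge q$ and then $r+q\ge s+1$. Once these cross-hypothesis deductions are in place, the remaining inequalities are direct, completing the verification of packedness (and hence the lemma) in all three cases.
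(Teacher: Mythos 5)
Your proposal is correct, and the heart of it --- computing the four $L_1$-norms $\norm{\mathbf{a}}$, $\norm{\mathbf{b}}$, $\norm{\mathbf{a}-\mathbf{b}}$, $\norm{\mathbf{a}+\mathbf{b}}$ explicitly and reducing the packedness condition (\ref{eq:packed}) to the stated hypotheses together with $0\le r<s$ and $q\ge 2$ --- is exactly what the paper does; your norm values agree with the paper's in all three cases, and your two ``cross-hypothesis'' chains ($s+1\le r+q$ from $2r\ge s+1$ and $r\le q$ in case (b); $s+1\le 2r$ from $r\ge q$ and $r+q\ge s+1$ in case (c)) are the same deductions the paper encodes by exhibiting the full orderings $\norm{\mathbf{a}}\le\norm{\mathbf{b}}\le\norm{\mathbf{a}+\mathbf{b}}\le\norm{\mathbf{a}-\mathbf{b}}$ and $\norm{\mathbf{b}}\le\norm{\mathbf{a}}\le\norm{\mathbf{a}+\mathbf{b}}\le\norm{\mathbf{a}-\mathbf{b}}$. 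The only genuine divergence is in the basis claim: the paper cites Chen et al.\ and gives the explicit decomposition $(x_1,x_2)=(kq-x_2)(s,-1)+k(r,q)$ for $x_1+x_2s=kn$, showing directly that every point of $X$ is an integer combination of the generators, whereas you argue that $X=\ker l$ has index $n$, that each candidate vector lies in $X$, and that the generated sublattice has index $|{\det}|=sq+r=n$, so the containment is an equality. Both are standard and correct; the determinant argument is self-contained and uniform across the two candidate pairs, while the paper's decomposition is more concrete but leaves the passage from the basis $\{(s,-1),(r,q)\}$ to $\{(s,-1),(r-s,q+1)\}$ (a unimodular change of basis) implicit.
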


\begin{proof}
It can be shown (see \cite[pp.6]{Chen2005}) that any point in $X$ is of the form $i(s,-1)+j(r,q)$ for some integers $i$ and $j$. (In fact, for any $(x_1, x_2) \in X$, we have $x_1+x_2 s=kn$ for some integer $k$, and so $(x_1, x_2)=(kq-x_2)(s,-1)+k(r,q)$.) It follows that the pair $\{\mathbf{a,b}\}$ in each case is a basis for $X$. It remains to verify that $\{\ba, \bb\}$ satisfies (\ref{eq:packed}). Recall that $q \ge 2$ as $s < n/2$.
 
(a) Let $\ba = (s, -1)$ and $\bb = (r, q)$. Since $r\le q$, $2r\le s + 1$ and $q \ge 2$, we have $\norm{\ba} = s+1, \norm{\bb} = r+q, \norm{\ba - \bb} = s - r +  q + 1$, $\norm{\ba+\bb} = s + r +  q - 1$, and $\{\ba, \bb\}$ satisfies (\ref{eq:packed}).

(b) Let $\ba = (s, -1)$ and $\bb = (r - s, q + 1)$. Since $r\le q$ and $2r\ge s + 1$, we have $\norm{\ba} = s+1, \norm{\bb} = s - r+q+1, \norm{\ba - \bb} = 2s - r +  q + 2$ and $\norm{\ba+\bb} = r +  q$. Hence $\norm{\ba} \le \norm{\bb} \le \norm{\ba + \bb} \le \norm{\ba - \bb}$ and $\{\ba, \bb\}$ satisfies (\ref{eq:packed}).  

(c) Let $\ba$ and $\bb$ be as in (b). Since $r\ge q$ and $r + q \ge s + 1$, the norms of $\ba, \bb, \ba - \bb, \ba+\bb$ are the same as in (b). Hence $\norm{\bb} \le \norm{\ba} \le \norm{\ba + \bb} \le \norm{\ba - \bb}$ and $\{\ba, \bb\}$ satisfies (\ref{eq:packed}).  
\end{proof}

	\begin{lem}
	\label{lem:Ldist}
If $r\le q$ or $r + q\ge s + 1$, then
\be 
\label{eq:lb}
\sum_{i\in\ZZ_n}d(0,i)\geq \left\lfloor\frac{(s+1)^2}{2}\right\rfloor.
\ee
	\end{lem}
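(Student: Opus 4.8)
The plan is to reduce the sum to a lattice computation and then evaluate it with the machinery already set up. First I would note that the hypothesis ``$r\le q$ or $r+q\ge s+1$'' is exactly the union of the three cases of Lemma~\ref{lem:pb}: the regime $r\le q$ splits into $2r\le s+1$ (case~(a)) and $2r\ge s+1$ (case~(b)), while $r>q$ together with $r+q\ge s+1$ is case~(c). So in every case we are handed a packed basis $\{\ba,\bb\}$ for $X$. Using Lemma~\ref{prop:conta}, the labelling $l$ restricts to a bijection from the half-open parallelogram $[\ba,\bb)$ onto $\ZZ_n$, and (\ref{eq:dl}) gives $d(0,l(\mathbf v))=\norm{\mathbf v-X}$. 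Hence
\[
\sum_{i\in\ZZ_n} d(0,i)=\sum_{\mathbf v\in[\ba,\bb)}\norm{\mathbf v-X}.
\]
Because $\{\ba,\bb\}$ is packed, Lemma~\ref{lem:dist} lets me replace the global minimum $\norm{\mathbf v-X}$ by the minimum over the four corners, so the right-hand side becomes $\sum_{\mathbf v\in[\ba,\bb)}\min\{\norm{\mathbf v},\norm{\mathbf v-\ba},\norm{\mathbf v-\bb},\norm{\mathbf v-\ba-\bb}\}$.

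Next I would reorganise this nearest-corner sum. The function $\mathbf v\mapsto\norm{\mathbf v-X}$ is invariant under translation by $X$, so its sum is the same over any fundamental domain; replacing $[\ba,\bb)$ by the $L_1$-Voronoi cell $V_0=\{\mathbf v:\norm{\mathbf v}\le\norm{\mathbf v-\mathbf x}\text{ for all }\mathbf x\in X\}$ (ties broken to keep one representative per coset) turns the sum into $\sum_{\mathbf v\in V_0}\norm{\mathbf v}$. Equivalently, the central symmetry $\mathbf v\mapsto\ba+\bb-\mathbf v$ of the parallelogram pairs the four corner-regions into two congruent pairs, and translating each region back to the origin reassembles $V_0$. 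I would then evaluate $\sum_{\mathbf v\in V_0}\norm{\mathbf v}$ by slicing $V_0$ into lines parallel to the shorter basis vector: along each line the nearest-corner distance is a piecewise-linear ``tent'' in the $L_1$ metric, so the contribution of a line is essentially a sum of consecutive integers (a triangular number), and summing these across the slices produces a closed form of order $(s+1)^2/2$.

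The main obstacle is getting the constant exactly right. Because the bound is tight (for instance $C_{2s+1}(1,s)$, where $q=2$, attains equality, as one checks on small cases), the estimate cannot be crude: I must track the integer endpoints of each slice, which depend on the parity of $s$ (and of $q$), and verify that the total is at least $\lfloor(s+1)^2/2\rfloor$ rather than something slightly smaller. A secondary difficulty is uniformity: the second basis vector $\bb$ takes the two different shapes $(r,q)$ and $(r-s,q+1)$ across cases~(a)--(c), which tilts the parallelogram differently, so the slicing and the arithmetic of the endpoints must be carried out (in parallel) for each shape. I expect the clean way to control the parities and to avoid a separate treatment of every case is to retain only those slices whose contribution is needed to reach $\lfloor(s+1)^2/2\rfloor$ and to bound the remaining ones below by $0$, trading the exact evaluation for a lower bound that is still sharp enough.
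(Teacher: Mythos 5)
Your opening reduction coincides exactly with the paper's: the hypothesis does cover precisely the three cases of Lemma~\ref{lem:pb}, and the packed basis, the bijection of Lemma~\ref{prop:conta}, and the corner reduction of Lemma~\ref{lem:dist} are deployed in the same way. You then diverge by proposing to evaluate the \emph{full} sum $\sum_{\bv}\norm{\bv-X}$ over a fundamental domain via an $L_1$-Voronoi decomposition and slicing; the paper never attempts this for the general lemma (it carries out such an exact computation only in the special case $s=q=\sqrt n$, Lemma~\ref{lem:ssqrtn}, where the Voronoi regions are easy to describe), and your own closing remark retreats to what the paper actually does, namely discard all but a thin boundary layer. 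Concretely, the paper keeps only the points $\bv_i=(i,0)$ and $\bw_i=(\ba+\bb)-(i,0)$ for $1\le i\le s$ (for the basis $\{(s,-1),(r-s,q+1)\}$, for $1\le i\le s-1$ plus the single extra point $\bu=(1,1)$ with $\norm{\bu-X}=2$), checks that they lie in $[\ba,\bb]$ and carry pairwise distinct labels (this uses $s<n/2$), and shows that the nearest corner is $\mathbf{0}$ or $\ba+\bb$ when $i\le\lfloor s/2\rfloor$ and $\ba$ or $\bb$ otherwise, so the distances are $i$ and $s-i+1$ and their total is exactly $\lfloor (s+1)^2/2\rfloor$ in each parity case. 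The one ingredient your sketch omits and would have to supply is that the hypothesis $r\le q$ (resp.\ $r+q\ge s+1$) is used a \emph{second} time, beyond securing the packed basis, in the inequalities such as $|r-i|+q\ge i$ that certify which corner is nearest to each chosen point. Your route, if pushed through, would yield the exact mean distance; the paper's yields only the lower bound, but with a two-line computation that is already tight, as your $C_{2s+1}(1,s)$ check confirms.
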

	
\begin{proof}
Since $r\le q$ or $r + q\ge s + 1$, one of the three cases in Lemma \ref{lem:pb} occurs, and in each case we have a packed basis $\{\ba, \bb\}$ for $X$ as given in Lemma \ref{lem:pb}. By Lemma \ref{prop:conta}, for any $i \in \ZZ_n$, there exists at least one $\mathbf{v}\in[\mathbf{a,b}]$ such that $l(\mathbf{v})=i$. Moreover, by Lemma \ref{lem:dist}, $d(0,i)=\min\{{\norm{\mathbf{v} - \mathbf{x}}: \mathbf{x}} = \mf{0}, \mf{a}, \mf{b}, \mf{a} + \mf{b}\}$. We now compute the sum of these distances $d(0,i)$ for $i$ in a certain subset of $\ZZ_n$. 

\medskip\textsf{Case 1:}~$r\le q$ and $2r\le s+1$. In this case we have $\ba=(s,-1),\bb=(r,q)$ by Lemma \ref{lem:pb}. Set $\bv_i=(i,0)$, $\bw_i=(\ba+\bb)-(i,0)$, $\alpha_i=iq/n$, $\beta_i=i/n$, $\alpha_i'=1-iq/n$ and $\beta_i'=1-i/n$ for $1 \le i \le s$. Then $\alpha_i,\beta_i,\alpha_i',\beta_i'\in[0,1]$, $\bv_i=\alpha_i\ba+\beta_i\bb$, $\bw_i=\alpha_i'\ba+\beta_i'\bb$ and $\bv_i, \bw_i\in[\ba, \bb]$ for each $i$. Since $l(\bv_i)=i,l(\bw_i)=n-i$ and  $s< n/2$, $l(\bv_1), \ldots, l(\bv_s), l(\bw_1), \ldots, l(\bw_s)$  are pairwise distinct. It can be verified that
$$
\norm{\bv_i - \b0}=\norm{\bw_i-(\ba+\bb)}=i,\quad \norm{\bv_i-\ba}=\norm{\bw_i-\bb}=s-i+1,
$$ 
$$
\norm{\bv_i-(\ba+\bb)}=\norm{\bw_i-\b0}=s+r-i+q-1,\quad \norm{\bv_i-\bb}=\norm{\bw_i-\ba}=|r-i|+q.
$$ 
Assume $1\leq i\leq \lfloor s/2\rfloor$. Since $r\leq q$, we have $|r-i| + q\ge i$ (as $i-r+q\ge i$ if $i>r$ and $(r-i)+q\ge r\ge i$ if $i\le r$) and $s+r-i+q-1 \ge s + 1 - i \ge i$. Therefore, $\norm{\bv_i-X}=\norm{\bv_i-\b0}$ and $\norm{\bw_i-X}=\norm{\bw_i-(\ba+\bb)}$. For $\lfloor s/2\rfloor< i\leq s$, it can be verified that 
$\norm{\bv_i-X}=\norm{\bv_i-\ba}$ and $\norm{\bw_i-X}=\norm{\bw_i-\bb}$.
Therefore, 
$$
\begin{array}{lll}
\sum_{i\in\ZZ_n}d(0,i) & \geq & \sum_{i=1}^{s}(\norm{\bv_i-X}+\norm{\bw_i-X}) \\[0.2cm]
& = & \sum_{i=1}^{\lfloor s/2 \rfloor} 2i + \sum_{\lfloor s/2 \rfloor + 1}^s 2(s-i+1) \\[0.2cm]
& \ge & \big \lfloor\frac{(s+1)^2}{2} \big \rfloor.
\end{array}
$$

\medskip
\textsf{Case 2:}~either $r\le q$ and $2r\ge s+1$, or $r\ge q$ and $r+q\ge s+1$. Then $r+q\ge s+1$,  $2r\ge s+1$, and $\ba=(s,-1), \bb=(r-s,q+1)$ by Lemma \ref{lem:pb}.
 Set $\bv_i=(i,0)$, $\bw_i=(\ba+\bb)-(i,0)$, $\alpha_i=i(q+1)/n$, $\beta_i=i/n$, $\alpha_i'=1-i(q+1)/n$, $\beta_i'=1-(i/n)$ for $1\le i\le s-1$ and $\bu=(1,1)$.  Then $\alpha_i,\beta_i,\alpha_i',\beta_i'\in[0,1]$, $\bv_i=\alpha_i\ba+\beta_i\bb$, $\bw_i=\alpha_i'\ba+\beta_i'\bb$, and $\bv_i, \bw_i \in[\ba, \bb]$ for each $i$. Moreover, $\bu=((s-r+q+1)/n)\ba+((s+1)/n)\bb \in[\ba, \bb]$. Since  $l(\bv_i)=i,\l(\bw_i)=n-i,l(\bu)=s+1$ and $s<n/2$, $l(\bv_1),\dots, l(\bv_{s-1}), l(\bw_1), \dots, l(\bw_{s-1}) ,l(\bu)$ are pairwise distinct.
	It can be verified that 
	$$\norm{\bv_i-\b0}=\norm{\mathbf{w}_i-(\ba+\bb)}=i,\quad \norm{\bv_i-\ba}=\norm{\bw_i-\bb}=s-i+1,$$
	$$\norm{\bv_i-(\ba+\bb)}=\norm{\bw_i-\b0}=|r-i|+q, \quad \norm{\bv_i-\bb}=\norm{\bw_i-\ba}=s-r+i+q+1.$$

Assume $1\leq i\leq \lfloor s/2\rfloor$. Since $r+q\ge s+1$, we have  $ r-i+q\ge s-i+1\ge i$ and $ s-r+i+q+1\ge i$. Hence $\norm{\mathbf{v}_i-X}=\norm{\mathbf{v}_i-\b0}$ and $\norm{\bw_i-X}=\norm{\bw_i-(\ba+\bb)}$. If $\lfloor s/2\rfloor< i\leq s-1$, then $\norm{\mathbf{v}_i-X}=\norm{\mathbf{v}_i-\ba}$ and $\norm{\mathbf{w}_i-X}=\norm{\mathbf{w}_i-\bb}$. Note that $\norm{\bu-X}=2.$
		Therefore, 
	$$
\begin{array}{lll}
\sum_{i\in\ZZ_n}d(0,i) & \geq & \sum_{i=1}^{s-1}(\norm{\mathbf{v}_i-X}+\norm{\mathbf{w}_i-X})+\norm{\bu-X} \\ [0.2cm]
& = & \sum_{i=1}^{\lfloor s/2 \rfloor} 2i + \sum_{\lfloor s/2 \rfloor + 1}^{s-1} 2(s-i+1) + 2 \\[0.2cm]
& \ge & \big\lfloor \frac{(s+1)^2}{2}\big\rfloor.
\end{array}
$$
\end{proof}	

\begin{proof}[Proof of Theorem \ref{thm:lwr3}]
Suppose $r \le q$ or $r + q \ge s + 1$. Since $C_{n}(1, s)$ is vertex-transitive, we have $\sum_{(i, j) \in \ZZ_n \times \ZZ_n} d(i, j) = n \sum_{i \in \ZZ_n} d(0, j)$. This together with (\ref{eq:trvl}) and (\ref{eq:lb}) implies (\ref{eq:lwr3}). 
\end{proof}

As shown in the proof of Lemma \ref{lem:Ldist}, we obtained (\ref{eq:lb}) by computing the total distance from the vertex $0$ to $2s$ or $2s-1$ other vertices. This implies that the difference between the two sides of (\ref{eq:lb}) is small if and only if $s$ is close to $n/2$.  

The sum of the distances can be precisely computed when $s= \sqrt{n}$, leading to the following better lower bound than Theorem \ref{thm:lwr3} in this special case.  
\begin{lem}
	\label{lem:ssqrtn}
	If $s= q=\sqrt{n}$, then
	\[
			\pi(C_n(1,s)) \ge   \frac{\sqrt{n}(n-1)}{4}.
	\]
\end{lem}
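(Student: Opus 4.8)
The plan is to apply the distance-sum bound (\ref{eq:trvl}) together with vertex-transitivity, exactly as in the proof of Theorem \ref{thm:lwr3}, but now to compute $\sum_{i\in\ZZ_n} d(0,i)$ \emph{exactly} rather than bounding it below by the contribution of $O(s)$ vertices as in Lemma \ref{lem:Ldist}. The hypothesis $s=q=\sqrt{n}$ forces $r=0$ and $n=s^2$, so case (a) of Lemma \ref{lem:pb} applies (here $r=0\le q$ and $2r=0\le s+1$), and $\{\ba,\bb\}=\{(s,-1),(0,s)\}$ is a packed basis for $X$, with corner points $\b0$, $\ba=(s,-1)$, $\bb=(0,s)$ and $\ba+\bb=(s,s-1)$.

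First I would describe the fundamental domain explicitly: a short check shows that the half-open parallelogram $[\ba,\bb)$ is exactly the square grid $\{(i_0,i_1):0\le i_0,i_1\le s-1\}$, with $l(i_0,i_1)=i_0+i_1 s$. By Lemma \ref{prop:conta} these labels exhaust $\ZZ_n$, and by Lemma \ref{lem:dist} the distance from $0$ to each label is the $L_1$-distance from $(i_0,i_1)$ to the nearest of the four corners, which simplifies to
$$
d(0,i_0+i_1 s)=\min\{\,i_0+g(i_1),\ (s-i_0)+g(i_1+1)\,\},
$$
where $g(x):=\min(x\bmod s,\,s-(x\bmod s))$ is the distance from $0$ to $x$ in the cycle $C_s$. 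The two shifted corners $\ba,\ba+\bb$ are what encode the ``twist'' of $C_{s^2}(1,s)$: travelling the short way around the first coordinate costs one extra step in the second.

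Next I would sum this over the grid. Writing $\min(A,B)=\tfrac12(A+B)-\tfrac12|A-B|$ reduces the sum, for each fixed $i_1$, to the elementary quantity $\sum_{i_0=0}^{s-1}|2i_0-s+\delta|$ with $\delta=g(i_1)-g(i_1+1)\in\{-1,0,1\}$, together with the cycle total $\sum_{i_1}g(i_1)=(s^2-\epsilon(s))/4$. After counting how many $i_1$ produce each value of $\delta$ — which depends on the parity of $s$ — the pieces combine so that both parities give the single clean value $\sum_{i\in\ZZ_n}d(0,i)=s(s^2-1)/2$. Since $|E(C_n(1,s))|=2n$ and $\sum_{(i,j)\in\ZZ_n\times\ZZ_n}d(i,j)=n\sum_{i}d(0,i)$ by vertex-transitivity, (\ref{eq:trvl}) then yields $\pi(C_n(1,s))\ge\tfrac12\sum_i d(0,i)=s(s^2-1)/4=\sqrt{n}(n-1)/4$.

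The main obstacle is the \emph{exactness} demanded by the summation: the resulting bound is tight (equality holds throughout the distance computation), so there is no slack to absorb any rounding, and — unlike Lemma \ref{lem:Ldist}, where counting $O(s)$ boundary vertices with crude estimates sufficed — every one of the $n$ grid points must be assigned its correct nearest corner. I expect the parity bookkeeping to be the fiddliest part: when $s$ is odd the flat top of $g$ contributes a single column with $\delta=0$ that is absent when $s$ is even, and these two cases must be reconciled to the same final value, even though each individual sum is routine.
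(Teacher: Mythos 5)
Your proposal is correct and follows essentially the same route as the paper's proof: both use the packed basis $\{(s,-1),(0,s)\}$ from Lemma \ref{lem:pb}, apply Lemma \ref{lem:dist} to reduce each distance to the nearest of the four corner points of the fundamental parallelogram, evaluate $\sum_{i}d(0,i)=\sqrt{n}(n-1)/2$ exactly, and conclude via (\ref{eq:trvl}) and vertex-transitivity. The only difference is bookkeeping in the summation: the paper partitions the $s\times s$ grid into four rectangular regions according to the nearest corner and sums over each region directly, whereas you fold the four corners into a two-term minimum and expand $\min(A,B)=\tfrac{1}{2}(A+B)-\tfrac{1}{2}\lvert A-B\rvert$; both computations give the same value.
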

\begin{proof}
	By Lemma \ref{lem:pb}, $\{ (\sqrt{n},-1), (0,\sqrt{n})\}$ is a packed basis for $X$ in this case. One can see that the following is a closest corner point to $(i,j)$: 
	\begin{itemize}
		\item[\rm (a)] $(0,0)$, if $0\le i \le \lfloor \sqrt{n}/2 \rfloor$ and $0\le j\le \lfloor (\sqrt{n}-1)/2 \rfloor$;
		\item[\rm (b)] $(\sqrt{n},-1)$, if $\lfloor  \sqrt{n}/2 \rfloor +1  \le i\le  \sqrt{n}-1  $ and $0\le j \le \lfloor  \sqrt{n} /2 \rfloor-1$;
		\item[\rm (c)]  $(0,\sqrt{n})$, if $0 \le i \le \lfloor (\sqrt{n}-1)/2 \rfloor$ and $\lfloor (\sqrt{n}+ 1)/2 \rfloor \le j \le \sqrt{n} -1$ and 
		\item[\rm (d)]  $( \sqrt{n},\sqrt{n}-1)$, if $\lfloor (\sqrt{n}+1)/2 \rfloor \le i \le \sqrt{n}-1$ and $\lfloor  \sqrt{n}/2 \rfloor   \le j \le \sqrt{n} -1$.
	\end{itemize}
Since every point $(i, j)$ in $X$ appears in exactly one of these four cases, we have 
	\[
	\sum_{k\in\ZZ_n}d(0,k) = \sum_{i=0}^{\lfloor \sqrt{n}/2 \rfloor}\sum_{j=0}^{\lfloor (\sqrt{n}-1)/2 \rfloor} (i+j) +  \sum_{i=\lfloor \sqrt{n}/2 \rfloor+1}^{\sqrt{n}-1 } \sum_{j=0}^{\lfloor \sqrt{n}/2 \rfloor-1} (\sqrt{n}-i+j+1) +
	\]
	\[
	\sum_{i=0}^{\lfloor (\sqrt{n}-1)/2 \rfloor}\sum_{j=\lfloor (\sqrt{n}+1)/2 \rfloor}^{\sqrt{n}-1 } (i+\sqrt{n}-j) + \sum_{i=\lfloor (\sqrt{n}+1)/2 \rfloor}^{\sqrt{n}-1 } \sum_{j=\lfloor  \sqrt{n} /2 \rfloor}^{\sqrt{n}-1 } (\sqrt{n}-i+\sqrt{n}-1-j)= \frac{\sqrt{n}(n-1)}{2}.
	\]
The result then follows from (\ref{eq:trvl}) and vertex-transitivity of $ C_n(1,s)$. 
\end{proof}

\section{A routing scheme, and proof of Theorem \ref{thm:tr1}}
\label{sec:ub}

In this section we give a specific routing scheme for $C_n(1,s)$ which yields the required upper bounds on the forwarding indices of $C_n(1,s)$. The same routing will be used in the next section to give upper bounds on the optical indices of $C_n(1,s)$. We will use the words `link' and `arc' interchangeably and we call a link of $C_n(1,s)$ of the form $(x,x\oplus1)$ ($(x,x\ominus 1)$, respectively) a {\em clockwise} ({\em anticlockwise}, respectively) {\em ring link}, and a link of the form $(x,x\oplus s)$ ($(x,x\ominus s)$, respectively) a {\em clockwise} ({\em anticlockwise}, respectively) {\em skip link}. We define a routing as follows.

\begin{const}
\label{const}
{\em 
Define 
\be
\label{eq:R}
{\cal R} := \{P_{x, y}: x, y \in \ZZ_n, x \ne y\},
\ee
where $P_{x, y}$ is the path in $C_n(1,s)$ from $x$ to $y$ specified as follows. 
\begin{enumerate} 
	\item \label{step:s1}
		For $d = 1, \ldots, \nn$, say, $d=is+j$ for some $i,j$ with $0\le i\le \qq$ and $0\le j\le s-1$, 
	\begin{enumerate}[(a)]
		\item if $j \leq \ss $, then define $P_{0,d}: 0,s,2s,\dots,is,is+1, is + 2, \dots,is+j$;
		\item if $j> \ss $, then define $P_{0,d}: 0,s,2s,\dots,(i+1)s,(i+1)s-1,(i+1)s-2,\dots,(i+1)s-(s-j)$. 
	\end{enumerate}

	\item \label{step:s3}
		For $d = \nn +1, \ldots, n-1$, letting $P_{0, n-d}: v_1,v_2,\dots,v_k$ be the path from $0$ to $n-d$ constructed in Step 1, define $P_{0,d}: v_1, n-v_2,\dots, n-v_k$. 

	\item \label{step:s4}
		For $1\leq x, y\leq n-1$ with $x \ne y$, letting $v_1,v_2,\dots,v_k$ denote the path $P_{0,y\ominus x}$ from $0$ to $y\ominus x$ constructed in Step 1 or 2, 
define $P_{x,y}: x\oplus v_1,x\oplus v_2,\dots, x\oplus v_k$.
\end{enumerate}
}
\end{const}

The routing $\cal R$ constructed above is symmetric in the sense that, for any $x, y, k \in \ZZ_n$ with $x \ne y$, $P_{x\oplus k, y\oplus k}$ is the path obtained by adding $k$ to each node of $P_{x, y}$.  We say that $P_{x\oplus k, y\oplus k}$ is obtained from {\em translation} of $P_{x, y}$ by $k$. This feature is crucial in the following computation of $\api(C_n(1,s),{\cal R})$. Denote
$$
\Delta := \left\{ 
\begin{array}{ll}
\frac{s}{4} + \frac{1}{2} \left\lfloor\frac{r}{2}\right\rfloor \left(s - \left\lfloor\frac{r+2}{2}\right\rfloor\right),\;\; \mbox{if $s$ is even}\\ [0.3cm]
\frac{1}{2} \left\lfloor\frac{r+1}{2}\right\rfloor \left(s - \left\lfloor\frac{r+1}{2}\right\rfloor\right),\;\; \mbox{if $s$ is odd}.
\end{array}
\right.
$$
One can verify that the four numbers involved in the next lemma are integers.

\begin{lem}
\label{lem:ub}
\begin{itemize} 
\item[\rm (a)] If $q$ is even, then  
$$ 
\api(C_n(1,s),{\cal R}) = 
\max\left\{  \frac{q}{4} \BL{s^2}{2}  + \frac{1}{2}\brrr \BL{r+2}{2},
\frac{q^2s}{8} + \frac{q}{2} \left(\brrr + \frac{\epsilon(s)  }{2}\right)  \right\}.
$$
\item[\rm (b)] If $q$ is odd, then
$$
\api(C_n(1,s),{\cal R}) =
\max\left\{  \frac{q}{4}\BL{s^2}{2} +\Delta, 
\frac{(q^2-1)s}{8} + \frac{q+1}{2}\left\lfloor\frac{r+\epsilon(s)}{2}\right\rfloor
\right\}.
$$
\end{itemize}
\end{lem}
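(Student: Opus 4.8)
The plan is to exploit the two symmetries of $\cal R$ to reduce the maximum arc-load to the load on just two representative arcs, and then to evaluate those two loads by a direct summation over the source-$0$ paths.

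First I would record that $\cal R$ is invariant under two group actions. Translation invariance (stated right after Construction \ref{const}) implies that all clockwise ring arcs $(x, x\oplus 1)$ carry the same load, and likewise for the anticlockwise ring arcs, the clockwise skip arcs $(x, x\oplus s)$ and the anticlockwise skip arcs. Moreover, since for $d > \nn$ the path $P_{0,d}$ is the image of $P_{0, n-d}$ under the reflection $v \mapsto \ominus v$ (Step \ref{step:s3}), the whole routing is invariant under negation, which swaps clockwise and anticlockwise arcs of each kind. Hence the anticlockwise loads equal the corresponding clockwise loads, and $\api(C_n(1,s),{\cal R})$ is the maximum of just two numbers: the load $L_{\mathrm{skip}}$ on a clockwise skip arc and the load $L_{\mathrm{ring}}$ on a clockwise ring arc.

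Next I would establish the counting identity that turns each per-arc load into a sum over the paths out of $0$. Fix the clockwise skip arc $(0,s)$. Because $P_{x,y}$ is the translate of $P_{0, y\ominus x}$ by $x$, the arc $(0,s)$ lies on $P_{x,y}$ exactly when $P_{0,y\ominus x}$ contains the skip arc $(\ominus x, (\ominus x) \oplus s)$; letting $x$ run over $\ZZ_n$ sets up a bijection showing $L_{\mathrm{skip}} = \sum_{d=1}^{n-1} c^{+}(d)$, where $c^{+}(d)$ is the number of clockwise skip arcs on $P_{0,d}$. The same argument gives $L_{\mathrm{ring}} = \sum_{d=1}^{n-1} \rho^{+}(d)$ with $\rho^{+}(d)$ the number of clockwise ring arcs on $P_{0,d}$. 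Reading the two cases of Step \ref{step:s1} off the construction, for $d = is+j$ with $0 \le i \le \qq$ and $0 \le j \le s-1$ I would record $c^{+}(d) = i$ and $\rho^{+}(d) = j$ when $j \le \ss$, and $c^{+}(d) = i+1$ and $\rho^{+}(d)=0$ (with $s-j$ anticlockwise ring arcs) when $j > \ss$; the contributions of the reflected paths with $d > \nn$ are obtained by negation, so that the total ring usage of $P_{0,d}$ is $\min(j, s-j)$ and, by the reflection symmetry, splits evenly between the two directions.

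Finally I would carry out the two summations. For $i$ running from $0$ to $\qq - 1$ every residue $j \in \{0,\dots,s-1\}$ occurs, and these full blocks contribute the bulk of each sum, namely the terms $\tfrac{q^2 s}{8}$ (skip) and $\tfrac{q}{4}\BL{s^2}{2}$ (ring, using $\sum_{j=0}^{s-1}\min(j,s-j)=\tfrac12\BL{s^2}{2}$); the block $i = \qq$ is only partially present because $d$ must not exceed $\nn$, and it is this truncated block, governed by $r$ and by the parities $\epsilon(s)$ and $\epsilon(q)$, that produces the correction terms $\tfrac12\brrr\BL{r+2}{2}$, $\Delta$, $\tfrac{q}{2}\!\left(\brrr + \tfrac{\epsilon(s)}{2}\right)$ and $\tfrac{q+1}{2}\lfloor (r+\epsilon(s))/2\rfloor$, and forces the split into the cases $q$ even and $q$ odd. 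Assembling $\api(C_n(1,s),{\cal R}) = \max\{L_{\mathrm{ring}}, L_{\mathrm{skip}}\}$ then yields the stated formulas. The main obstacle is precisely this boundary bookkeeping: determining exactly which pairs $(i,j)$ with $i = \qq$ survive the constraint $d \le \nn$, and tracking the floor functions and parity terms so that the truncated block contributes exactly the claimed corrections rather than an off-by-one approximation.
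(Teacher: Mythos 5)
Your overall strategy is the same as the paper's: use translation invariance to reduce each arc-load to a count, over the source-$0$ paths $P_{0,d}$, $d=1,\dots,n-1$, of links of a given type and orientation, and then evaluate these counts by direct summation over $d=is+j$. However, there is a genuine gap at your second symmetry step. You claim that the routing is invariant under negation $v\mapsto \ominus v$ and deduce that the anticlockwise loads equal the clockwise loads, so that only two quantities need to be computed. This is false when $n$ is even: the involution $d\mapsto n\ominus d$ on $\{1,\dots,n-1\}$ has the fixed point $d=n/2$, and $P_{0,n/2}$, which is built in Step 1 of Construction \ref{const}, is \emph{not} its own reflection (its reflection is a different $0$--$n/2$ path that does not belong to ${\cal R}$). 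Consequently the clockwise and anticlockwise totals differ by exactly the asymmetry of this one unpaired path. For $q$ even, $P_{0,n/2}$ uses $r/2$ clockwise and no anticlockwise ring links, so the clockwise ring load exceeds the anticlockwise one; for $q$ odd it uses $(s-r)/2$ anticlockwise ring links and no clockwise ones, so the \emph{anticlockwise} ring load is the larger. A similar asymmetry occurs for skip links. Since these loads are totals over $d$ (not divided by $n$), the discrepancy is of order $r$ or $s-r$ and directly affects the correction terms $\frac12\brrr\BL{r+2}{2}$, $\Delta$, etc., in the statement; your ``splits evenly between the two directions'' assertion would therefore produce an off-by-$(s-r)/2$ (or, if you halve the two-sided total, off-by-$(s-r)/4$) error in the ring load whenever $n$ is even.

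This is precisely the content of the paper's Claims 1 and 2, which you would need to supply: the maximum skip load is always attained on clockwise skip arcs, while the maximum ring load is attained clockwise when $q$ is even but \emph{anticlockwise} when $q$ is odd, and the subsequent link counts are carried out in the correct orientation in each case (which is why the odd-$q$ formula contains $\Delta$, an anticlockwise count). To repair your argument, drop the negation-invariance claim, treat the self-paired path $P_{0,n/2}$ separately, and determine for each parity of $q$ which orientation of each link type carries the larger load before summing. With that correction, the rest of your outline --- the per-path counts $c^{+}(d)=i$ or $i+1$ and $\rho^{+}(d)=j$ or $0$, and the full-block-plus-truncated-block summation --- matches the paper's computation.
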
 

\begin{proof}
Observe that if a path $P_{x, y}$ in $\cal R$ passes through a clockwise ring link $(v, v\oplus1)$, then the path $P_{x\oplus (v'- v), y\oplus (v' - v)}$ (which is also in $\cal R$) passes through the clockwise ring link $(v', v'\oplus 1)$. Hence the loads on all clockwise ring links under $\cal R$ are equal. Similarly, all anticlockwise ring links have the same load, all clockwise skip links have the same load, and all anticlockwise skip links have the same load. So the load on each clockwise ring link (skip link, respectively) is the total number of clockwise ring links (skip links, respectively) used by paths of $\cal R$ divided by $n$. The same can be said for anticlockwise ring or skip links. 

On the other hand, for any fixed $x\in\ZZ_n$, the paths $P_{x,y}$, $x \ne y=0,\dots,n-1$, use the same number of ring (skip, respectively) links as the paths $P_{0,d}$, $d=1,\dots,n-1$,  because $P_{x,y}$ is the translation of $P_{0,y\ominus x}$ by $x$ and therefore there is a bijection between these two sets of paths. Since there are $n$ translations for  any path $P_{0,d}$, $d=1,\dots,n-1$, we conclude that the load on any ring  (skip, respectively)  link in each direction is the total number of used  ring  (skip, respectively)  links in that direction by paths  $P_{0,d}$, $d=1,\dots,n-1$.

\medskip
\textsf{Claim 1:}~The maximum load on ring links under $\cal R$ is equal to the number of clockwise ring links used when $q$ is even, and anticlockwise ring links used when $q$ is odd, by paths  $P_{0,d}$, $d=1,\dots,n-1$.  

\medskip
\textsf{Claim 2:}~The maximum load under $\cal R$ on skip links is equal to the number of clockwise skip links used by paths  $P_{0,d}$, $d=1,\dots,n-1$.   

\medskip
\textit{Proof of Claims 1 and 2.}~For any path $P_{0,d}$ in ${\cal R}$, where $d < n/2 $, the path $P_{0,n - d}$ is in ${\cal R}$ and is distinct from $P_{0,d}$. Moreover, if $(u, v)$ is a link in one of these two paths, then $(n\ominus u,n\ominus v)$ is a link in the other (with opposite direction). If $n$ is odd, then $d\ne n - d$ for all $d\in\ZZ_n$, and so the number of clockwise ring links (skip links, respectively) used is equal to the number of anticlockwise ring links (skip links, respectively) used by the paths  $P_{0,d}$.

Assume $n$ is even. If $d=n/2$, then $P_{0,d}$ and $P_{0,n - d}$ are identical, and this path does not use any anticlockwise skip link. Hence $P_{0,n/2}$ uses fewer anticlockwise skip links than clockwise skip links. So the maximum load on skip links is equal to the number of clockwise skip links used by the paths $P_{0,d}$. 

If $q$ is even, then $n/2=q s /2 + r /2$; in this case $P_{0,n/2}$ uses $r/2$ clockwise ring links, and so it uses fewer anticlockwise ring links than clockwise ring links. Thus the maximum load on ring links is equal to the number of clockwise ring links used by the paths  $P_{0,d}$. 

If $q$ is odd, then $n/2 = (q-1)s /2 + (s+r)/2$; in this case $P_{0,n/2}$ uses $ (s-r)/2$ anticlockwise ring links, and so it uses fewer clockwise ring links. Hence the maximum load on ring links is equal to the number of anticlockwise ring link used by the paths  $P_{0,d}$. This completes the proof of Claims 1 and 2. 
\qed
 
We now count the number of links in $C_n(1,s)$ used by paths $P_{0,d}$, $d=1,\dots,n-1$, for even $q$ and odd $q$ separately. 

\medskip
\textsf{Ring links:}~We first count the number of clockwise (anticlockwise, respectively) ring links used by paths $P_{0,d}$ when $q$ is even (odd, respectively), where $P_{0,d}$ is defined in Construction \ref{const} in Step 1  when $d=is+j \le\nn $ and in Step 2 when $d=n-(is+j) >\nn$, where $0\le i\le \qq$ and $0\le j\le s-1$.

\medskip
\textsf{Case 1:}~$q$ is even.  If $d\le \nn$, then $is+j \le qs/2 +\rr$; if $d >\nn$, then $d=n-(is+j)$ and so $is+j<qs/2+\lceil r/2 \rceil$. 
When $d\le \nn$, by Step \ref{step:s1}, the path $P_{0,d}$ uses some clockwise ring link if and only if either $0 \le i \le q/2-1$ and $0 \le j \le \ss$, or $i=q/2$ and $0 \le j \le \rr$. When $d>\nn $, by Steps \ref{step:s1}(b) and \ref{step:s3}, $P_{0,d}$ uses some clockwise ring link if and only if $1 \le i \le q/2$ and $\ss+1 \le j \le s-1$. Moreover, $P_{0,d}$ uses $j$ ($s-j$, respectively) clockwise ring links if $d\le \nn$ ($d> \nn$, respectively). Therefore, the total number of clockwise ring links used by the paths $P_{0,d}$, $d=1,\dots,n-1$, is equal to
\be
\label{eq:even1} 
\sum_{i=0}^{q/2-1}\sum_{j=0}^{\ss} j + \sum_{i=q/2}^{q/2}\sum_{j=0}^{\rr} j  +  \sum_{i=1}^{q/2}\sum_{j=\ss+1}^{s-1} (s-j) = 
 \frac{q}{4} \left\lfloor \frac{s^2}{2} \right\rfloor + \frac{1}{2}\brrr\left\lfloor\frac{r+2}{2}\right\rfloor. 
\ee

\medskip
\textsf{Case 2:}~$q$ is odd. If $d\le \nn$, then $is +j \le (q-1)s/2 + \sr$; if $d >\nn$, then $d=n-(is+j)$ and so $is+j < (q-1)s/2 +  \lceil (s+r)/2 \rceil$. 
When $d\le \nn$, by Step \ref{step:s1}(b), $P_{0,d}$ uses some anticlockwise ring link if and only if either $1 \le i \le (q-1)/2$ and $\ss+1 \le j \le s-1$, or $i= (q+1)/2$ and $\ss+1 \le j \le \sr$. When $d > \nn$, by Steps \ref{step:s1}(a) and \ref{step:s3}, $P_{0,d}$ uses some anticlockwise ring link if and only if $0 \le i \le (q-1)/2$ and $1 \le j \le \ss$. 
The path $P_{0,d}$ uses $s-j$ ($j$, respectively) anticlockwise ring links if $d\le \nn$ ($d> \nn$, respectively). 
 Therefore, the total number of anticlockwise ring links used by the paths $P_{0,d}$, $d=1,\dots,n-1$, is given by 
\be
\label{eq:odd1}
\sum_{i=1}^{(q-1)/2}\sum_{j=\ss+1}^{s-1} (s-j) + \sum_{i=(q+1)/2}^{(q+1)/2}\sum_{j=\ss+1}^{\sr} (s-j)  +  \sum_{i=0}^{(q-1)/2}\sum_{j=1}^{\ss}j  = 
  \frac{q}{4} \left\lfloor \frac{s^2}{2} \right\rfloor  + \Delta.
\ee 

\medskip
\textsf{Skip links:}~
Now we evaluate the load on clockwise skip links. Note that $P_{0,d}$ uses clockwise skip links if $d\le \nn $. 

\medskip
\textsf{Case 3:}~$q$ is even. In this case $P_{0,d}$ uses exactly $k$  clockwise skip links if and only if  either (i) $i=k$, $0\le j \le \ss $, if $1\le k \le q/2-1$; (ii) $i=k$, $0 \le j \le \rr$, if $k=q/2$; or (iii)  $i=k-1$, $\ss +1 \le j \le s-1$, if $1\le k \le q/2$. So the total number of clockwise skip links used by the paths $P_{0,d}$, $d=1,\dots,n-1$, is equal to
\be
\label{eq:even2}
\sum_{k=1}^{q/2-1}k \left(\bsss +1\right)  + \sum_{k=q/2}^{q/2} k\left(\brrr+1\right) + \sum_{k=1}^{q/2} k\left(s - \bsss -1\right) = \\[.2cm] 
\frac{q^2s}{8} + \frac{q}{2} \left(\brrr + \frac{\epsilon(s)  }{2}\right) .
\ee

\medskip
\textsf{Case 4:}~$q$ is odd. By Step \ref{step:s1}, $P_{0,d}$ uses exactly $k$  clockwise skip links if and only if  either (i) $i=k$, $0 \le j \le \ss$, if $1\le k \le (q-1)/2$; (ii) $i=k-1$, $\ss +1\le j \le s-1$, if $1\le k \le (q-1)/2$; or (iii) $i=k-1, \ss+1 \le j \le \sr$, if $k=(q+1)/2$. Thus the total number of clockwise skip links used by the paths $P_{0,d}$, $d=1,\dots,n-1$, is equal to
\be
\label{eq:odd2}
\sum_{k=1}^{(q-1)/2} k \bssspl  + \sum_{k=1}^{(q-1)/2} k \left\lceil \frac{s-2}{2} \right\rceil + \sum_{k=(q+1)/2}^{(q+1)/2} k \left\lfloor\frac{r+\epsilon(s)}{2}\right\rfloor 
 =\frac{(q^2-1)s}{8}+ \frac{q+1}{2}\left\lfloor\frac{r+\epsilon(s)}{2}\right\rfloor.
\ee
 
Using (\ref{eq:even1})-(\ref{eq:odd2}), Claims 1 and 2 imply the required results immediately. 
\end{proof}

By comparing the two terms in each case of Lemma \ref{lem:ub}, we can identify the maximum term for different ranges of $s$, which is presented in the following lemma.

\begin{lem}
\label{lem:ub1}
The following hold:
\begin{itemize}
\item[\rm (a)] if $q$ is even and $2 \le  s \le \sqrt{n-1}$, then  
\be
\label{eq:ub}
\api(C_n(1,s),{\cal R}) \le \frac{q(n+r+2\epsilon(s))}{8};
\ee

\item[\rm (b)] if  $q$ is odd and $2 \le  s \le \sqrt{n} -1$, then 
\be
\label{eq:ub1}
\api(C_n(1,s),{\cal R}) \le \frac{q( n+ r+2\epsilon(s)) +s }{8};
\ee 
 
\item[\rm (c)] if $q=s=\sqrt{n}$, then the loads on all links of $C_n(1,s)$ are equal and
\be
\label{eq:ub2} 
\api(C_n(1,s),{\cal R}) = \frac{\sqrt{n}\ (n - \epsilon(s))}{8};
\ee
 
\item[\rm (d)] if $q$ is even and $s \ge \sqrt{n}+1$, then
\be
\label{eq:ub3} 
\api(C_n(1,s),{\cal R}) \le \frac{sn+r-\epsilon(s)q}{8};
\ee
 
\item[\rm (e)] if $q$ is odd and $s \ge \sqrt{n}$, then
\be
\label{eq:ub4}
\api(C_n(1,s),{\cal R}) \le \frac{s (n + r+ 2) - \epsilon(s)q}{8}.
\ee 
\end{itemize}
\end{lem}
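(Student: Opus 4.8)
The plan is to read off $\api(C_n(1,s),{\cal R})$ from Lemma~\ref{lem:ub} as $\max\{M_1,M_2\}$, where $M_1$ denotes the ``ring term'' (the first entry in the maximum, i.e.\ the number of ring links used in the heavier direction) and $M_2$ the ``skip term'' (the second entry, the number of clockwise skip links used), and then, in each of the five cases, to (i) decide which of $M_1,M_2$ is larger and (ii) bound that larger term by the stated closed form. Throughout I would use $n=qs+r$ together with $2\lfloor s^2/2\rfloor=s^2-\epsilon(s)$ (valid since $\epsilon(s^2)=\epsilon(s)$) to clear the floor in the $\lfloor s^2/2\rfloor$ terms and to substitute $q^2s=q(n-r)$ and $qs^2=s(n-r)$; this is exactly what turns the expressions of Lemma~\ref{lem:ub} into the forms in \eqref{eq:ub}--\eqref{eq:ub4}.

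First I would settle which term dominates. For even $q$, writing out and simplifying gives
\[
8(M_2-M_1)=qs(q-s)+3q\,\epsilon(s)+4\left\lfloor\tfrac r2\right\rfloor\Bigl(q-\left\lfloor\tfrac{r+2}2\right\rfloor\Bigr),
\]
with an analogous expression (now carrying $\Delta$, $\lfloor(r+\epsilon(s))/2\rfloor$ and $\epsilon(s)$) for odd $q$. The leading part $qs(q-s)=(n-r)(q-s)$ has the sign of $q-s$, i.e.\ of $\sqrt n-s$. The hypotheses pin $q$ against $s$ with room to spare: the small-$s$ assumptions of (a),(b) both give $n>s^2$, whence $qs=n-r>s^2-s$ yields $q\ge s$ (and the stronger bound $s\le\sqrt n-1$ in (b) yields $q\ge s+2$, the margin needed to absorb the odd-$q$ lower-order terms), so $M_2\ge M_1$; symmetrically $s\ge\sqrt n+1$ forces $(s-1)^2\ge n$, whence $q\le s-2$ and $M_1\ge M_2$ in (d),(e); and $s=q=\sqrt n$ forces $r=0$, where direct substitution collapses both $M_1,M_2$ to $\sqrt n\,(n-\epsilon(s))/8$, giving (c).

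With the maximand identified, each closed-form bound follows from a one-line estimate $(\text{target})-(\text{maximand})\ge0$. In (a) the target minus $M_2$ equals $q\,\epsilon(r)/4\ge0$; in (d) the target minus $M_1$ equals $\tfrac{r(s+1)}8-\tfrac12\lfloor r/2\rfloor\lfloor(r+2)/2\rfloor$, which is nonnegative because $\lfloor r/2\rfloor\lfloor(r+2)/2\rfloor\le r(r+2)/4$ and $s>r$; the other cases are the same kind of estimate, using only $\lfloor x\rfloor\le x$ and $r<s$ with the substitutions above. Since in (a),(b) we already know $M_2\ge M_1$ and in (d),(e) we know $M_1\ge M_2$, bounding the maximand alone bounds the whole maximum.

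The real obstacle is the domination claim in (d),(e), where the leading term $(n-r)(q-s)$ is \emph{negative} and must swamp the positive lower-order floor contributions. A crude estimate fails: one must read $\lfloor r/2\rfloor\bigl(q-\lfloor(r+2)/2\rfloor\bigr)$ as a downward parabola in $r$, maximized (since $q<s$) at $r=q$ with value at most $q^2/4$, and combine this with the margin $s-q\ge2$ from $s\ge\sqrt n+1$ to obtain $8(M_1-M_2)\ge q(2s-3-q)\ge q(s-1)>0$. The odd-$q$ subcases are fiddlier, since the parity bookkeeping splits further through $\Delta$ and $\lfloor(r+\epsilon(s))/2\rfloor$; I would therefore organize the whole argument into the four parity combinations $(q,s)\in\{\text{even},\text{odd}\}^2$, reusing the parabola-in-$r$ estimate in each. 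The incidental statement in (c) that the directional loads coincide I would read off from $r=0$: then $P_{0,n/2}$ carries no ring link, and the reflection $u\mapsto n\ominus u$ pairs the paths $P_{0,d},P_{0,n-d}$ while interchanging the two directions, balancing the loads in the symmetric ($n$ odd) regime.
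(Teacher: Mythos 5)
Your proposal is correct and follows exactly the route the paper intends: the paper offers no written proof of this lemma beyond the remark that it is obtained by comparing the two terms in each case of Lemma \ref{lem:ub}, and your case analysis (clearing floors via $2\lfloor s^2/2\rfloor=s^2-\epsilon(s)$, substituting $qs=n-r$, reading the sign of $q-s$ off the hypotheses with the margins $q\ge s+2$ resp.\ $q\le s-2$, and bounding the floor terms by the downward parabola in $r$) is a correct execution of that comparison. The only soft spot is the side-claim in (c) that \emph{all} link loads coincide: your balancing argument, as you yourself note, only covers $n$ odd, and indeed for $n=s^2$ even the path $P_{0,n/2}$ uses $s/2$ clockwise and no anticlockwise skip links, so the anticlockwise skip load is strictly smaller --- but this does not affect the displayed value of $\api(C_n(1,s),{\cal R})$, which is the maximum load.
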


Moreover, when $\sqrt{n}$ is not an integer, if $s = \lfloor\sqrt{n}\rfloor$ and $q$ is odd or $s = \lfloor\sqrt{n}+1\rfloor$ and $q$ is even, then the greater term in each case in Lemma \ref{lem:ub} relies on $r$. However, the two terms are almost equal for sufficiently large $n$.  When $\sqrt{n}$ is an odd integer and $q=s=\sqrt{n}$,  the conditions in cases (c)  and  (e) are satisfied simultaneously. Although the right hand sides of (\ref{eq:ub2}) and (\ref{eq:ub4}) are equal in this special case, the result in case (c) is slightly stronger as we have equality in (\ref{eq:ub2}).

\begin{proof}[Proof of Theorem \ref{thm:tr1}] 
As mentioned in Section \ref{subsec:two}, the lower bounds in (\ref{eq:tf1})-(\ref{eq:tf3}) follow from (\ref{eq:pi}) and Lemmas \ref{lem:lwr1}, \ref{lem:lwr2} and \ref{lem:ssqrtn} immediately. The upper bounds in (\ref{eq:tf1})-(\ref{eq:tf3}) follow from Lemma \ref{lem:ub1} and the fact that $\api(C_n(1,s)) \le \api(C_n(1,s),{\cal R})$. In fact, if $2 \le  s \le \sqrt{n} -1$, then (\ref{eq:ub}) or (\ref{eq:ub1}) applies. By (\ref{eq:ub}) and  (\ref{eq:ub1}), we obtain  $\api(C_n(1,s)) \le (q( n+ r+2\epsilon(s))/8)+(s/8) = ((n-r)(n+r+2)+s^2)/8s$.  If $s=\sqrt{n}$, then $q=\lfloor n/s \rfloor =\sqrt{n}$ and by (\ref{eq:ub2}) we obtain $\api(C_n(1,s)) \le (\sqrt{n} (n - \epsilon(s) ) / 8$.  
If $\sqrt{n}+1 \le s < n/2$, then by (\ref{eq:ub3}) and (\ref{eq:ub4}) we have $\api(C_n(1,s)) \le (s(n +r +2)-\epsilon(s)q)/8 = (s^2(n+r+2)-\epsilon(s)(n-r))/8s$.  
\end{proof}

\section{Proof of Theorem \ref{thm:tr2}}  
\label{sec:wave}

The lower bounds in (\ref{eq:thw1})-(\ref{eq:thw3}) follow from (\ref{eq:piw}) and Theorem \ref{thm:tr1} immediately. Let $\cal R$ be the routing defined in (\ref{eq:R}). Since $\aw(C_n(1,s)) \le \aw(C_n(1,s),{\cal R})$ and $w(C_n(1,s)) \le  w(C_n(1,s),{\cal R})$, it suffices to prove the upper bounds for $\aw(C_n(1,s),{\cal R})$ and $w(C_n(1,s),{\cal R})/2$.   In the following we give an arc-conflict-free colouring of ${\cal R}$ and compute the number of colours used. This number gives the required upper bound for $\aw(C_n(1,s), {\cal R})$. 
 
By Construction \ref{const}, the unique path in $\cal R$ connecting $x$ to $y=x\oplus (is+j)$ is $P_{x,y} $, which we denote by $P_{ij}^x$ in the rest of this proof, where $|i| \le \cqq$, $|j| \le \ss$ and $|is+j|\le n/2$. In other words, $P_{ij}^x$ connects $x$ to $y$ by $i$ successive skip links followed by $j$ successive ring links, where negative $i$ and $j$ stand for anticlockwise skip and ring links, respectively.  
Given $i$ and $j$, set
$$
\alpha := |j|/\gcd(s,|j|),\;\, \beta := s/\gcd(s,|j|), 
$$
where $\gcd(s,|j|)$ is the greatest common divisor of $s$ and $|j|$. Then $\alpha$ is the smallest positive integer such that $|j|$ divides $\alpha s$. Note that $\alpha s=\beta|j|$.

\begin{const}
\label{colouring}
{\em 
We define a colouring $f: {\cal R} \rightarrow \ZZ^3$ by using elements of $\ZZ^3$ as colours. 
\begin{enumerate}
\item 
If $j>0$ and $|i| \le j$, let $x_{\alpha}=\lfloor x/\alpha s\rfloor$.  
\begin{enumerate}[(a)]
\item 
If $ x \le n/2$, define $f(P^x_{ij})=(\epsilon(x_{\alpha}) j +(x+\lfloor x_{\alpha}/2\rfloor \mod{j}),i,j)$; 
\item 
if $x > n/2$, define $f(P^x_{ij})=( (2 + \epsilon(x_{\alpha})) j +(x+\lfloor x_{\alpha}/2\rfloor \mod{j}),i,j)$. 
\end{enumerate}
\item 
If $|i| > j\geq0$, let $x_0=x \mod s$.   
\begin{enumerate}[(a)]
\item 
If $x < \lfloor q/2\rfloor s$ and $x_0 \leq s - j$, define $f(P^x_{ij})=(x_0+\lfloor x/s\rfloor \mod{|i|},i,j)$;
\item if $x <(\lfloor q/2\rfloor-1)s$ and $x_0 > s - j$, define $f(P^x_{ij})=( i + (x_0 +\lfloor x/s\rfloor \mod{|i|}), i , j)$;
\item 
if $(\lfloor q/2\rfloor - 1) s \le x < \lfloor q/2\rfloor s$ and $x_0 > s - j$, define $f(P^x_{ij})=( 2i + x_0 + j - s , i , j)$;
\item 
if $\lfloor q/2\rfloor s \le x \le  n - j $ and $x_0 \leq s-j$, define $f(P^x_{ij})=(i+(x_0+\lfloor x / s\rfloor+ s-q-r -1 \mod{|i|}),i,j)$;
\item 
if $\lfloor q/2\rfloor s \le x \le  n - j $ and $x_0 > s-j$, define $f(P^x_{ij})=( x_0 +\lfloor x  / s \rfloor -q-r \mod{|i|}, i , j)$; 
\item 
if $ x >  n - j $, define $f(P^x_{ij})=( 2i + x + j - n, i , j)$. 
\end{enumerate}
\item If $j<0$, define $f(P^x_{ij})=f(P^x_{(-i)(-j)})$. 
\end{enumerate}
}
\end{const}

By the definition above, $f(P^x_{i j}) \ne f(P^{y}_{k l})$  if $k \ne i$ or $l \ne j$ when $jl \ge 0$, and if $k \ne -i$ or $l \ne -j$ when $jl < 0$. 
Since $P^x_{ij}$ and $P^y_{(-i)(-j)}$ have no common link in the same direction, to prove that $f$ is arc-conflict-free, it suffices to verify that $P^x_{ij}$ and $P^y_{ij}$ do not have any common link if $f(P^x_{i j}) = f(P^{y}_{i j})$,  or equivalently $f(P^x_{i j}) \ne f(P^{y}_{i j})$ if $P^x_{ij}$ and $P^y_{ij}$ have a common link.

Fix $i$ and $j$. Assume $x < y$. Note that $P^x_{i j}$ and $P^y_{i j}$ share a skip link if and only if $y \ominus x = h s$ for some $h$ with $0< |h| < |i|$, and they share a ring link if and only if $y \ominus x < |j|$.  

\medskip
\textsf{Case 1:}~$j>0$ and $|i|\leq j$. Assume $f(P^x_{i j})=f(P^y_{i j})$. Then by Step 1, $x_\alpha$ and $y_\alpha$ have the same parity and either $x,y \le n/2$ or $x,y> n/2$.   So if $P^x_{ij}$ and $P^y_{ij}$ share a ring link, then $y - x<j$, and if they share a skip link, then $y-x= h s$ for some $h$ with $0<h<|i|$. In the following we show that neither of these can happen, and therefore $P^x_{ij}$ and $P^y_{ij}$ cannot have any common link. 

\medskip
\textsf{Subcase 1.1:}~$x_\alpha=y_\alpha$.  We have $x+\lfloor x_\alpha/2\rfloor \equiv	 y +\lfloor y_\alpha/2\rfloor\mod{j}$ as $f(P^x_{ij})=f(P^y_{ij})$. So  $y=x+\gamma j$ as $x_\alpha=y_\alpha$, where $\gamma\ge 1$. 
Hence $P^x_{ij}$ and $P^y_{ij}$ do not share any ring link as $y - x\ge j$. If they share a skip link, then $y = x+hs$, $0<h< |i|$,  which together with $y = x+\gamma j$ gives $h s=\gamma j$. Since $x_\alpha=y_\alpha$, we have $y - x <\alpha s$ and so $h<\alpha$. The latter  inequality together with $hs=\gamma j$ contradicts the choice of $\alpha$. Hence  $P^x_{ij}$ and $P^y_{ij}$ do not share any skip link. 

\medskip
\textsf{Subcase 1.2:}~$x_\alpha\ne y_\alpha$. We have $x=x_\alpha \alpha s+l_x s+x_0$ and  $y=y_\alpha \alpha s+ l_y s+y_0$, where $0\le l_x,l_y\le \alpha-1$, $x_0= x\mod s$ and $y_0= y \mod s$. 
Since $x_\alpha$ and $y_{\alpha}$ have the same parity, we have $y_\alpha-x_\alpha\ge 2$ and so $y -  x =  (y_\alpha - x_\alpha) \alpha s + (l_y - l_x)s + y_0 - x_0 > \alpha s =\beta j \ge  j$ as $|l_y - l_x|\le  \alpha - 1$ and $|y_0 - x_0| < s$.
 So $P^x_{ij}$ and $P^y_{ij}$ do not share any ring link. 
 
Suppose by way of contradiction that $P^x_{ij}$ and $P^y_{ij}$ have a common skip link. Then $y = x  + h s$ with $h=\alpha + t$, for a positive integer $t$, as $y_\alpha - x_\alpha \ge 2$. Also $y_\alpha\alpha s+ l_y s =x_\alpha\alpha s+ l_x s +h s$ as $y_0=x_0$.   Since $f(P^x_{ij}) = f(P^y_{ij})$, we have  $ x + \lfloor  x_{\alpha}/2\rfloor \equiv y +\lfloor y_\alpha/2\rfloor \equiv x +hs+\lfloor y_\alpha/2\rfloor\mod j$. Hence $(y_\alpha - x_\alpha)/2 \equiv -h s \equiv -(\alpha s+t s) \equiv  - t c \mod j$ as $\alpha s = \beta j$, where $ c = s \mod{j}$. If $c=0$, then $y_\alpha-x_\alpha\ge 2j$ as $y_\alpha>x_\alpha$ and $(y_\alpha-x_\alpha)\alpha s \ge 2j\alpha s$. 
If $c\ne0$, then $(y_\alpha-x_\alpha)/2=k j - t c=\gcd(c,j)(k a_1 - t a_2)$, where $\gcd(a_1,a_2)=1$ and $k$ is an integer.  So $y_\alpha - x_\alpha \ge 2 \gcd(c,j)$ as $y_\alpha > x_\alpha$. Since $\alpha = j / \gcd(s,j)$ and $\gcd(s \mod j,j)=\gcd(s,j)$, we have $\alpha = j / \gcd(c,j)$, $c \ne 0$. So $ (y_\alpha - x_\alpha) \alpha s \ge 2 \gcd(c,j) \alpha s = 2 j s$. Since $(y_\alpha - x_\alpha) \alpha s = hs + (l_x - l_y) s$, we have $hs + (l_x - l_y) s \ge 2 j s$, and so $ h s \ge 2 j s + (l_y - l_x) s \ge 2 j s + (1-\alpha )s=s + (2s - \beta) j \ge (j+1) s >  |i| s$ as $\beta\le s$ and $j\ge |i|$, contradicting the fact $h < |i|$. Hence $P^x_{ij}$ and $P^y_{ij}$ have no common skip link.
 
 \medskip
\textsf{Case 2:}~$|i|>j\geq0$. We show that $P^x_{ij}$ and $P^y_{ij}$ are assigned distinct colours if they share a link. We first assume that they share a ring link, so that either $y - x \le j - 1$ or $x - y + n \le j - 1$. Denote by $f(P^x_{ij})_1$ the first coordinate of $f(P^x_{ij})$.

\medskip
\textsf{Subcase 2.1:}~$y - x \le j - 1$. Since $j\le \ss$, either $\lfloor y /s \rfloor=\lfloor x/s \rfloor+1$ or $\lfloor y /s \rfloor=\lfloor x /s \rfloor$. 
If $\lfloor y /s \rfloor=\lfloor x /s \rfloor + 1$, then $y_0 \le j$ and $x_0 > s - j$ as $y - x \le j-1$, which implies that $y$ and $x$ respectively satisfy either (a) and (b), or (d) and (c), or (d) and (e), or (f) and (e) in Step 2, and so $f(P^x_{ij})_1$ and $f(P^y_{ij})_1$ differ by at least $i$.  
Now assume $\lfloor y/s \rfloor=\lfloor x/s \rfloor$. If  $x_0 \le s-j$ and $y_0 > s-j$, then $f(P^x_{ij})_1$ and $f(P^y_{ij})_1$ differ by at least $i$; otherwise  
 $x_0 + \lfloor x/s \rfloor \not\equiv y_0 + \lfloor y/s \rfloor \mod i$ as $y_0 - x_0 \le j-1 < |i|$. So $f(P^x_{ij})\ne f(P^y_{ij})$.

\medskip
\textsf{Subcase 2.2:}~$x - y + n \le j - 1$. In this case we have $0 \le x < j$ and $n - j < y < n$, and so $f(P^x_{ij})_1$ and $f(P^y_{ij})_1$ differ by at least $2i$ by (a) and (f) in Step 2. 

Now we assume that $P^x_{ij}$ and $P^y_{ij}$ share a skip link, so that either $y=x+hs$ or $x= y +hs-n$, where $0< h <|i|$.

\medskip
\textsf{Subcase 2.3:}~$y = x+hs$. So $y_0=x_0$  and $\lfloor y/s \rfloor - \lfloor x/s \rfloor = h$. If both $x$ and $y$ satisfy the same condition in Step 2 (namely, one of (a)-(f)), then $f(P^x_{ij}) \ne f(P^y_{ij})$ since $x_0+\lfloor x /s\rfloor \ne x_0+h+\lfloor  x /s\rfloor \equiv  y_0 + \lfloor y/s \rfloor \mod |i|$. If  $\lfloor q/2 -1\rfloor s\le x  < \lfloor q/2\rfloor s$ and $y > n - j$, then $x_0=y_0 > s-j$ ($j< \ss$) and $y \ge (q-1) s$, and so $\lfloor y/s \rfloor - \lfloor x/s \rfloor \ge \cqq \ge |i| > h$, which contradicts $y = x+h s$. For other ranges of $x$ and $y$, we have $x_0=y_0$ and so $f(P^x_{ij}) \ne f(P^y_{ij})$. 

\medskip
\textsf{Subcase 2.4:}~$x = y+hs-n$. In this case we have $x < h s$ and $y> n- h s = (q-h)s +r $. Since $h< |i| \le \cqq$, we then have $0 \le x < \lceil (q - 2)/2 \rceil s$ and $\lfloor (q+2)/2\rfloor s \le y < n$.    Also we have $y_0=x_0+ r\mod{s}$ and $\lfloor y/s \rfloor = \lfloor (x+r)/s \rfloor + q - h$. 

We have the following: (i) when $r=0$ (which implies $y_0 = x_0$) or $n-j < y$, $f(P^x_{ij})_1$ and $f(P^y_{ij})_1$ differ by at least $i$ by Step 2; 
(ii) when $x_0\le s-j$ and $y_0 > s-j$, $f(P^x_{ij})_1 \equiv x_0+\lfloor x/s \rfloor\mod |i|$ and $f(P^y_{ij})_1=(x_0 + r) +(\lfloor x/s \rfloor + q - h)  -q -r \equiv x_0+\lfloor x / s \rfloor-  h \mod |i|$, and so $f(P^x_{ij})_1 \ne f(P^y_{ij})_1$;  
(iii) when $x_0> s - j$ and $y_0 \le s - j$, we have $y_0 = x_0 + r - s$ and $\lfloor y/s \rfloor = \lfloor x/s \rfloor + 1+ q - h$; hence $f(P^x_{ij})_1 \equiv i + ( x_0 + \lfloor x/s \rfloor\mod |i|)$ and $f(P^y_{ij})_1 \equiv  i+((x_0 + r  - s ) +( \lfloor x /s \rfloor+ 1+ q -  h)+s-q-r-1\mod |i|) \equiv i+(x_0+\lfloor x /s \rfloor - h \mod |i|)$, implying $f(P^x_{ij})_1 \ne f(P^y_{ij})_1$; 
 (iv) when $x_0 > s - j$ and $y_0 > s - j$, or $x_0\le  s-j$ and $y_0 \le s-j$, $f(P^x_{ij})_1$ and $f(P^y_{ij})_1$ differ by at least $i$ by Step 2. 
 
In summary, whenever $P^x_{ij}$ and $P^y_{ij}$ share a link, they are assigned different colours.
 
\medskip
\textsf{Case 3:}~$j<0$. If $f(P^x_{ij})=f(P^y_{ij})$, then $f(P^x_{(-i)(-j)})=f(P^y_{(-i)(-j)})$ by Construction \ref{colouring}. Thus, by what we proved in Cases 1 and 2, $P^x_{(-i)(-j)}$ and $P^y_{(-i)(-j)}$ do not have any common link. Therefore, $P^x_{ij}$ and $P^y_{ij}$ have no common link.   

So far we have proved that the colouring $f: {\cal R} \rightarrow \ZZ^3$ is arc-conflict-free.  

The number of colours used by $f$ is $|f({\cal R})|$. We now estimate this number and thus obtain the required upper bounds for $\aw (C_n(1,s),{\cal R})$ by using $\aw (C_n(1,s),{\cal R}) \le |f({\cal R})|$. For fixed $i$ and $j$, $f$ uses $4j$ colours if $j \ge |i|$, $2|i|+j$ colours if $0 \le j < |i|$, and no new colours if $j<0$. Note that $|i| \le \cqq$ and $|j| \le \ss$ as mentioned earlier.
Thus, if $j \ge |i|$ and $j > \cqq$, then $-\cqq \le i \le \qq$; and if $0 \le j \le |i| -1$ and $|i| -1 > \ss$, then $0 \le j \le \ss$. Therefore, by setting $\gamma_1 := \min \{\ss, \cqq\}$ and $\gamma_2 := \min\{\ss + 1, \cqq\}$ and noting $\sum_{i=-k}^k  2|i| =\sum_{i=1}^k 4 i $, we obtain 

\be
\label{eq:sumf}
|f({\cal R})| = \sum_{ j = 1}^{\gamma_1}\sum_{i = - j}^{j} 4j 
 + \sum_{ j = 1 + \gamma_1 }^{\ss} \sum_{i = - \cqq}^{\cqq} 4j  
 + \sum_{i = 1}^{\gamma_2} \sum_{j=0 }^{ i-1 } \left( 4i+2j \right) 
 +  \sum_{i =  1 +\gamma_2 }^{\cqq} \sum_{j=0}^{\ss} \left( 4 i+ 2j \right).
 \ee 
If $\ss \le \cqq - 2$, then (\ref{eq:sumf}) is equal to 
\be
\label{eq:wb1}
 \frac{1}{6}\bssspl \left( 3 q^2+ 6q + (3q+10)\bsss + 8\bsss^2\right)  + \frac{\epsilon(q)}{6}\bssspl \left( 3\bsss + q+\frac{3}{2}\right).
\ee 
If $\cqq -1 \le \ss \le \cqq$, then  (\ref{eq:sumf}) is equal to
\be
\label{eq:wb2}
 \frac{5 q^3+12 q^2 + 4q}{24} + \frac{2}{3} \bsss  \bssspl \left(4 \bsss + 5 \right) + \epsilon(q)\frac{ 5q^2 +13 q + 7}{8}.
\ee 
If $\ss \ge \cqq +1$, then  (\ref{eq:sumf}) is equal to
\be
\label{eq:wb3}
 \frac{q^3 +12 q^2 + 20 q}{24} + (2 q  + 2) \bsss \bssspl  +  \epsilon(q) \left( \frac{  q^2 + 9 q + 11 }{8} + 2 \bssspl \bsss \right).  
\ee

Recall from (\ref{eq:ka}) that $\kappa(a)=a+(\epsilon(s)+\epsilon(q))/2$. Thus, for $s = \sqrt{n-r + (\kappa(a))^2} + \kappa(a)$, we have $\ss = \cqq + a$ as $q=(n-r)/s$. Therefore, by applying $\ss \le s/2$ in (\ref{eq:wb1})-(\ref{eq:wb3}), we obtain upper bounds for $|f({\cal R})|$ which yields the upper bounds in (\ref{eq:thw1})-(\ref{eq:thw3}).

To prove that the upper bounds in (\ref{eq:thw1})-(\ref{eq:thw3}) also apply to $w(C_n(1,s),{\cal R})/2$,
we modify the definition of $f$ as follows. Define $f$ in the same as in Construction \ref{colouring} except that in Step 3 we redefine $f(P^x_{ij})=-f(P^x_{(-i)(-j)})$ for $j<0$. Obviously, $f(P^x_{i j}) \ne f(P^{y}_{k l})$  if $k \ne i$ or $l \ne j$. Moreover, when $P^x_{i j}$ and $P^{y}_{i j}$ share an edge, they share an arc and so are assigned distinct colours by the discussion above. Therefore, this modified colouring $f$ is edge-conflict-free. Since it uses twice as many colours as in the directed version, the upper bounds in (\ref{eq:thw1})-(\ref{eq:thw3}) are also upper bounds for $w(C_n(1,s),{\cal R})/2$. 
\qed

\bigskip
{\bf Acknowledgements}~~The authors appreciate Professor Graham Brightwell for his comments which led to Lemma \ref{lem:ssqrtn} and subsequent improvement of a lower bound. The authors also acknowledge an anonymous referee for her/his helpful comments. Mokhtar was supported by MIFRS and MIRS of the University of Melbourne and Zhou by a Future Fellowship (FT110100629) of the Australian Research Council. 
 
\small

\bibliographystyle{amsplain}

\end{document}